\title[Twice-punctured tori in a knot exterior]{A bound on the number of twice-punctured tori in a knot exterior}
\author[R. Aranda, E. Ram\'irez-Losada \and J. Rodr\'iguez-Viorato]{Román Aranda \and Enrique Ram\'irez-Losada \and Jes\'us Rodr\'iguez-Viorato}
\date{}
\newtheorem{theorem}{Theorem}
\newtheorem{definition}[theorem]{Definition}
\newtheorem{lemma}[theorem]{Lemma}
\newtheorem{conjecture}[theorem]{Conjecture}
\newtheorem{proposition}[theorem]{Proposition}
\begin{document}

\maketitle

\begin{abstract}
This paper continues a program due to Motegi regarding universal bounds for the number of non-isotopic essential $n$-punctured tori in the complement of a hyperbolic knot in $S^3$. For $n=1$, Valdez-S\'anchez showed that there are at most five non-isotopic Seifert tori in the exterior of a hyperbolic knot. In this paper, we address the case $n=2$. We show that there are at most six non-isotopic, nested, essential 2-holed tori in the complement of every hyperbolic knot. 
\end{abstract}

\section{Introduction}
It is a celebrated result of W. Jaco, P. Shalen, and K. Johannson that every Haken 3-manifold admits a unique decomposition along essential tori \cite{JSJ_Jaco_Shalen,JSJ_Johannson}. The resulting pieces are Seifert fibered spaces and other simple 3-manifolds. 
K. Motegi asked if the number of JSJ pieces in a 3-manifold obtained by Dehn surgery on a hyperbolic knot in $S^3$ is bounded. With this in mind, we are interested in finding a universal bound in the number of pairwise disjoint, essential punctured tori in the complement of hyperbolic knots. In \cite{tsutsumi}, Y. Tsutsumi gave a partial answer to Motegi's question by showing that every hyperbolic knot admits at most seven non-isotopic and disjoint genus one Seifert surfaces in its complement. In \cite{luis1}, L. Valdez-S\'anchez found the efficient lower bound by showing that there are at most five such once-punctured tori. 
In this paper, we give a first universal bound for the number of twice-punctured torus in the complement of a hyperbolic knot. 
\begin{theorem}\label{thm:cota-2toro-anidado}
Let $K$ be a hyperbolic knot in $S^3$. There are at most six pairwise disjoint, non-isotopic, nested, embedded twice-punctured tori with an integral slope in the complement of $K$. 
\end{theorem}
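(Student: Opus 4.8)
The plan is to cut the knot exterior along the given surfaces and to analyze the resulting pieces, bounding their number by means of the hyperbolicity of $K$.

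Write $M=S^{3}\setminus\operatorname{int}N(K)$ and $P=\partial N(K)$, and let $T_{1},\dots,T_{k}$ be the twice-punctured tori in question, each with two boundary curves of integral slope on $P$. I would first collect the following reductions. Since the $T_{i}$ are pairwise disjoint and essential, their $2k$ boundary curves are pairwise disjoint essential curves on the torus $P$, hence mutually isotopic; thus all the $T_{i}$ carry one common integral slope $s$, and their boundaries cut $P$ into $2k$ parallel annuli. As $H_{2}(M,\partial M)\cong\mathbb{Z}$ and a twice-punctured torus of slope $s\neq\infty$ is null-homologous rel boundary, each $T_{i}$ separates $M$; the hypothesis that the family is \emph{nested} says precisely that the $T_{i}$ can be indexed so that cutting $M$ along $T_{1}\cup\dots\cup T_{k}$ produces compact submanifolds $W_{0},W_{1},\dots,W_{k}$ with $W_{i-1}\cap W_{i}=T_{i}$, each interior block $W_{i}$ ($0<i<k$) having boundary $T_{i}\cup T_{i+1}\cup A_{i}'\cup A_{i}''$ for a pair of annuli $A_{i}',A_{i}''\subset P$, and the two end blocks $W_{0},W_{k}$ having boundary a single $T_{i}$ together with one annulus of $P$. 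Every face appearing here is incompressible and boundary-incompressible in $M$, hence in the block containing it, and each $W_{j}$ is irreducible since $M$ is.

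The heart of the argument is the classification of an interior block $W=W_{i}$. It is an irreducible $3$-manifold whose boundary is a genus-three surface, it contains the two \emph{non-isotopic} faces $T_{i}$ and $T_{i+1}$, and a homology computation shows that $W$ is a homology cobordism between twice-punctured tori, relative to $P$. Following Valdez-S\'anchez's treatment of the once-punctured case, and using sutured-manifold theory together with the characterization of $I$-bundles, I would argue that any essential annulus or torus found inside $W$ is essential in $M$ as well, since its compressing and boundary-compressing disks may be pushed off the incompressible surfaces $T_{j}$; as $M$ is hyperbolic it has no such annulus or torus, so any annulus or torus in $W$ must be $\partial$-parallel in $M$, i.e.\ parallel into $P$. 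This should force $W$ to be either a product $T_{i}\times I$ — impossible, since $T_{i}$ and $T_{i+1}$ are not isotopic — or one of a short, explicit list of blocks: an $I$-bundle over a surface of smaller complexity capped by a collar, or a Seifert-fibered/cable piece swept out by a $\partial$-parallel annulus, in each case carrying a preferred family of vertical annuli meeting the faces $T_{i},T_{i+1}$ in prescribed curves.

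Finally I would carry out the counting. Along the chain $W_{1},\dots,W_{k-1}$ the vertical annuli of consecutive exceptional blocks must match along the shared face $T_{i}$, and two adjacent exceptional blocks of the same type would glue up either into an essential annulus or torus in $M$ or into a product region, both of which are excluded; together with a separate analysis of the two end blocks $W_{0},W_{k}$, this should leave room for only boundedly many blocks and yield $k\le 6$. The principal obstacle I anticipate is exactly the classification of the interior block $W$: although a homology cobordism between twice-punctured tori may a priori be hyperbolic with complicated topology, one must combine non-isotopy of the two faces, the constraint of sitting between the other $T_{j}$ inside a hyperbolic knot exterior, and the rigidity coming from the integral slope to force $W$ into the short list above. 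A secondary, more technical, difficulty is bookkeeping the $2k$ annuli of $P$ and the way the $I$-fibered structures of neighbouring blocks fit together along the shared surfaces, so that the case-by-case tally is genuinely six rather than a larger constant.
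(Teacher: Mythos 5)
Your strategy is genuinely different from the paper's, and it stalls exactly at the step you yourself flag as the ``principal obstacle'': nothing in your outline forces an interior block $W$ onto a short explicit list, and the hyperbolicity of $M$ alone cannot supply such a list. The implication you propose --- an annulus or torus essential in $W$ would be essential in $M$, hence cannot exist, hence $W$ is an $I$-bundle or a small Seifert/cable piece --- fails at both ends. First, a surface essential in $W$ may be $\partial$-parallel in $M$ with the product region lying \emph{outside} $W$, so pushing compressions off the incompressible $T_j$'s does not make $W$ acylindrical in any boundary pattern you control. Second, even if $W$ were irreducible, atoroidal and acylindrical, that makes it a candidate \emph{hyperbolic} block, not a member of a finite list: a priori each $W_i$ can be arbitrarily complicated, and Kneser--Haken finiteness then bounds the number of blocks only in terms of the complexity of $M$, never universally. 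The sutured-manifold and $I$-bundle technology you want to import from Valdez-S\'anchez is deployed in \cite{luis1} \emph{inside a genus-two handlebody}; your outline never produces a handlebody to work in, because it never uses the embedding of $M$ in $S^3$ beyond homology.

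That embedding is precisely the missing idea and the paper's entire reduction: cap a twice-punctured torus $T_i$ off with one of the two annuli of $\partial N(K)\setminus\partial T_i$ to obtain a closed genus-two surface which, since $K$ is hyperbolic, bounds a genus-two handlebody on one of its sides (Tsutsumi, \cite[Section 3]{tsutsumi}). This traps the nested family, or each half of it, inside genus-two handlebodies, and the theorem reduces to the purely handlebody-theoretic Theorem \ref{thm:cota-2toro-en-dos-asas}: a curve pair $J$ cutting an annulus off $\partial V$ bounds at most three mutually disjoint, non-parallel, incompressible twice-punctured tori in $V$. The bound six comes from applying this count of three on the two sides, and all of the real work (boundary-compressions, the arc-type case analysis, primitive/power-circle and tangle arguments) happens inside the handlebody. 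Your opening reductions (common slope, the chain of blocks $W_0,\dots,W_k$) are fine and consistent with the paper --- though note that a twice-punctured torus of integral slope is null-homologous rel boundary only when its two boundary components are oppositely oriented, so ``separating'' is part of what ``nested'' encodes rather than a consequence of $H_2(M,\partial M)\cong\mathbb{Z}$ --- but without the handlebody input neither the block classification nor the final tally of six is within reach.
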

We believe that our upper bound is not efficient. Recently, M. Eudave-Mu\~noz and M. Teragaito built infinite families of hyperbolic knots with four non-isotopic twice-punctured tori in their exterior (personal communication). 
\begin{conjecture}
There are at most four pairwise disjoint, non-isotopic, nested, embedded twice-punctured tori with an integral slope in the complement of any hyperbolic knot in $S^3$.
\end{conjecture}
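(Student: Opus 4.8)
The plan is to refine the analysis behind Theorem~\ref{thm:cota-2toro-anidado}, where the bound six is obtained, and to show that the two remaining cases responsible for the gap between four and six cannot occur in a hyperbolic exterior. I would begin exactly as in that proof: take a maximal collection $T_1,\dots,T_m$ of the surfaces in question, use the nesting hypothesis to linearly order them so that consecutive surfaces $T_i,T_{i+1}$ cobound a compact region $W_i\subset E(K)=S^3\setminus \mathrm{int}\,N(K)$, and record that, because every boundary curve is an integral-slope curve on $\partial N(K)$, all $2m$ boundary circles share the same slope, are mutually parallel, and cut $\partial N(K)$ into $2m$ annuli; each frontier $\partial W_i$ is then assembled from $T_i$, $T_{i+1}$, and a subcollection of these annuli. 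The surfaces being essential (incompressible and boundary-incompressible) forces each $W_i$ to have incompressible frontier.

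The first substantive step is to classify the cobounding regions $W_i$. Since $K$ is hyperbolic, $E(K)$ is atoroidal and anannular, so no $W_i$ can carry an essential torus or an essential annulus that persists in $E(K)$. Combined with incompressibility, this pins each $W_i$ down to a short list: either $W_i$ is a product $T_i\times I$ --- which is excluded, since then $T_i$ and $T_{i+1}$ would be isotopic --- or it is one of finitely many non-product ``building blocks,'' each realizing a nontrivial change of isotopy class across the cobordism. The content of the bound six is that at most five such non-product blocks can be stacked; to prove the conjecture I must sharpen this to at most three, giving $m-1\le 3$ and hence $m\le 4$.

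The sharpening is where the real work lies. The idea is that a long stack of non-product blocks forces a repetition: if four or more blocks appear, then either two of the interface surfaces $T_i,T_j$ with $i<j$ become isotopic across the intermediate region, or the union of two adjacent blocks together with the annuli of $\partial N(K)$ between them assembles into an essential annulus or torus in $E(K)$, contradicting hyperbolicity. Making this precise means tracking the gluing homeomorphisms between consecutive blocks along their shared interface surface and along $\partial N(K)$, and showing that the mapping-class constraints imposed by the fixed integral slope leave no room for a fourth block. The innermost and outermost regions, bounded on one side by $\partial N(K)$ rather than by two of the $T_i$, must be treated separately, since the list of admissible blocks there is different and may, in fact, supply the two configurations that survive to give the sharp count four.

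The main obstacle is precisely this passage from six to four: the product-versus-non-product dichotomy together with the anannular and atoroidal hypotheses already bounds the stack, but squeezing out the last two cases requires a genuinely finer invariant of the cobounding regions than the one used to reach six --- most plausibly a bookkeeping of how the integral-slope boundary annuli are distributed among the $W_i$, forcing a repeated configuration once $m\ge 5$. I expect the recently announced examples of Eudave-Mu\~noz and Teragaito realizing four such tori to be essential here, both as confirmation that four is attainable and as a guide to exactly which extremal gluings the argument must exclude.
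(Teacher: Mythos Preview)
The statement you are attempting to prove is labeled a \emph{conjecture} in the paper; the paper does not prove it and offers no argument for the bound four, so there is nothing to compare your proposal against. More to the point, your proposal is not a proof but a strategy outline, and you say so yourself: ``The main obstacle is precisely this passage from six to four\ldots squeezing out the last two cases requires a genuinely finer invariant\ldots most plausibly a bookkeeping of how the integral-slope boundary annuli are distributed.'' That passage is the entire content of the conjecture, and you have supplied neither the invariant, nor the bookkeeping, nor any concrete mechanism forcing a repetition once $m\ge 5$. The appeal to the Eudave-Mu\~noz--Teragaito examples as a ``guide'' is likewise not an argument. What you have written is a plausible research plan, not a proof.

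I should also flag that your reconstruction of how the paper obtains the bound six is not accurate. The paper does not classify the cobounding regions $W_i$ between consecutive tori into ``building blocks'' and count how many can be stacked. Instead it observes (via Tsutsumi) that a closed genus-two surface in a hyperbolic knot exterior bounds a handlebody on one side, and reduces the question to Theorem~\ref{thm:cota-2toro-en-dos-asas}: at most three disjoint, non-parallel, incompressible, separating twice-punctured tori spanning a given pair of curves in a genus-two handlebody. That theorem is proved by boundary-compressing the outermost surface and analyzing the resulting arc types, ultimately reducing to Tsutsumi's once-punctured-torus bound (Lemma~\ref{lem:tsustumi}) and its pair-of-pants analogue (Lemma~\ref{lem:tsutusmi-pair-of-pants}). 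If you want to attack the conjecture, sharpening those handlebody lemmas is the natural place to start; the region-by-region stacking argument you sketch is a different and, as written, unsubstantiated approach.
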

Our proofs build on the work of Tsutsumi and Valdez-S\'anchez studying incompressible surfaces in genus two handlebodies. Along the way, we discover some technical lemmas that may be of interest to those solving tangle equations (See Propositions \ref{ref_prop2}, \ref{prop_tangle_argument} and \ref{prop_new_lem_3.5}). Expanding on the fact that genus two surfaces in the complement of hyperbolic knots in $S^3$ bound handlebodies on one of their sides (see \cite[Section 3]{tsutsumi} for details), Theorem \ref{thm:cota-2toro-anidado} is a direct corollary of the following result.
\begin{theorem}\label{thm:cota-2toro-en-dos-asas}
Let V be a handlebody of genus two, and let $J = J_1 \cup J_2$ be an essential pair of simple closed curves that separates an annulus from $\partial V$.
Then, $J$ bounds at most three mutually disjoint, non-parallel, incompressible, separating, twice-punctured tori in $V$.
\end{theorem}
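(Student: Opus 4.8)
The plan is to argue by contradiction: suppose $T_1,\dots,T_n$ are $n\ge 4$ twice-punctured tori satisfying the hypotheses. Since the family is nested, order it so that $T_i$ separates $T_{i-1}$ from $T_{i+1}$, and let $R_0,\dots,R_n$ be the closures of the components of $V\setminus\bigcup_i T_i$, with $R_{i-1}\cap R_i=T_i$. The curves $J=\partial T_i$ split $\partial V$ into the annulus $A$ and a twice-punctured torus $P$, and, since each $T_i$ is separating, a short argument (if $A$ or $P$ lay in a middle region $R_j$, then $T_j$ and $T_{j+1}$ would have it on the same side, contradicting that $J$ puts $A$ on one side of each $T_i$ and $P$ on the other) forces $A\subset R_0$ and $P\subset R_n$. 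Hence $\partial R_0=T_1\cup_J A$ is a closed genus-two surface, while $\partial R_i=T_i\cup_J T_{i+1}$ for $1\le i\le n-1$ and $\partial R_n=T_n\cup_J P$ are closed genus-three surfaces. I would also reduce to the case that each $T_i$ is $\partial$-incompressible: a $\partial$-compression produces an incompressible once-punctured torus or annulus, and that case is absorbed by the analysis of once-punctured tori in genus-two handlebodies due to Tsutsumi and Valdez-S\'anchez together with the annulus hypothesis.

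Next I would extract structure from the handlebody. For each $i$ the closed surface $\widehat T_i:=T_i\cup A$, with $\operatorname{int}A$ pushed slightly into $V$, is a genus-two surface inside the handlebody $V$, hence compressible. Running an innermost-disk and outermost-arc analysis against the incompressibility and $\partial$-incompressibility of $T_i$ — precisely the configuration addressed by Proposition~\ref{ref_prop2} — yields an essential disk of $V$ meeting the whole family $\{T_j\}$ in a controlled, small pattern; in the simplest outcome one obtains a meridian disk $D$ of $V$ with $\partial D$ a core curve of $A$, which, being disjoint from $J$, can be isotoped off every $T_j$ and therefore lies in $R_0$. In general this step produces a tangle description of the whole configuration: cutting $V$ along the disks so produced leaves a ball (or a union of solid tori) inside which $T_1,\dots,T_n$ appear as low-genus pieces whose boundary data on the cutting spheres satisfy a system of tangle equations.

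The main obstacle, and the core of the proof, is the combinatorial analysis of this tangle system. An Euler-characteristic count is useless here: reassembling $V=R_0\cup_{T_1}R_1\cup\cdots\cup_{T_n}R_n$ gives $\chi(V)=-1$ regardless of $n$, since the genus-three pieces $R_i$ and the gluing surfaces $T_i$ contribute cancelling amounts. What is needed instead is a rigidity statement: non-parallelism of consecutive surfaces says that no $R_i$ $(1\le i\le n-1)$ is a product $T_i\times I$, and each non-product piece must force an irreducible block of tangle data. Propositions~\ref{prop_tangle_argument} and \ref{prop_new_lem_3.5} are what makes this precise — the former constraining, up to a finite list, the ways a meridian disk of $V$ can cross the walls $T_i$, and the latter showing that each further non-product wall forces an additional pair of arcs of a prescribed type into the disk. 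Since a meridian disk of a genus-two handlebody can carry only boundedly much such data before it fails to be embedded or to be essential, at most three walls can occur; a fourth produces a disk forced simultaneously to be essential in $V$ and compressible to something simpler inside one of the $R_i$, which is the contradiction that establishes $n\le 3$.
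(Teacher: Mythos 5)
There is a genuine gap --- in fact the proposal inverts the paper's key mechanism and then leaves the actual counting argument unperformed. First, your ``reduction to the case that each $T_i$ is $\partial$-incompressible'' is vacuous and points the wrong way: an incompressible surface with nonempty boundary in a handlebody that is not a disk is \emph{always} $\partial$-compressible, so there is no $\partial$-incompressible case to reduce to. The paper's proof runs entirely on exploiting the $\partial$-compression: after cutting along the outermost torus one takes a $\partial$-compressing disk $D$ for the innermost remaining torus $F$, shows it must compress toward the twice-punctured-torus side of $\partial V$ rather than toward the annulus $A$ (Lemma \ref{lem:not_towards_A}), observes that $D$ meets every intermediate torus in an essential arc, and then classifies the arcs $\alpha\subset F_0$ and $\beta\subset F$ into three topological types (cutting a twice-punctured torus into a once-punctured torus, a pair of pants, or a disconnected surface). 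Lemmas \ref{lem:alphaI-iff-beta-I} and \ref{lem:betaIII-implies-alpha-III} cut the nine a priori combinations down to four cases, and each case reduces the count to a known bound: Tsutsumi's bound of four once-punctured tori (Lemma \ref{lem:tsustumi}) in case I--I, the new pair-of-pants analogue (Lemma \ref{lem:tsutusmi-pair-of-pants}) in case II--II, and a mixed annulus/once-punctured-torus analysis using Lemma \ref{lem:simple_pair_annulus} and Proposition \ref{pro:lem_3.10} in cases III--III and III--II. None of this structure appears in your outline.

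Second, the roles you assign to the cited propositions are not what those propositions say. Proposition \ref{ref_prop2} is a statement about factorizations of rational tangles, not an ``innermost-disk and outermost-arc analysis''; Proposition \ref{prop_tangle_argument} says that a pair of pants spanning $J$ in a genus two handlebody is boundary-parallel when two of the boundary curves are basic circles; and Proposition \ref{prop_new_lem_3.5} is a converse to Valdez-S\'anchez's gluing criterion for solid tori attached along annuli. In the paper these enter only inside the proof of Lemma \ref{lem:tsutusmi-pair-of-pants} (Section \ref{sec:proof-tsutsumi-pair-of-pants}), where the pair-of-pants case is converted into a $2$-string tangle equation. Your concluding step --- ``a meridian disk of a genus-two handlebody can carry only boundedly much such data before it fails to be embedded or to be essential, at most three walls can occur'' --- is the entire content of the theorem restated as an assertion, with no mechanism supplied for extracting a contradiction from a fourth surface. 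As written, the proposal identifies the correct toolbox but does not contain a proof.
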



\section{Preliminaries}
In this section, we review some results from \cite{luis1} and \cite{tsutsumi} that we will use later in this document. We begin introducing some basic vocabulary in the following definition.

\begin{definition}
Let $\gamma$ and $\gamma'$ be simple closed curves in $\partial H$.
\begin{itemize}
    \item A \emph{Companion annulus} for $\gamma$ is an annulus $A$ properly embedded in $H$ such that $\partial A$ bounds an annulus $B$ in $\partial H$ with $\gamma$ in the core of $B$.
    \item We say that $\gamma$ is a \emph{power circle} if  $[\gamma] = \alpha^n$ for some $\alpha \in \pi_1(H)$ and $n \geq 2$.
    \item The curve $\gamma$ is called a \emph{primitive curve} if $[\gamma]$ is part of a base for $\pi_1(H)$.
    \item We say that $\{\gamma, \gamma'\}$ are basic circles if they represent a basis for $\pi_1(H,*)$ for some base point. 
\end{itemize}
\end{definition}

The following results come from \cite[Prop 3.3]{luis1}; we extract the parts we need here.

\begin{proposition}[L. Valdez, \cite{luis1}]\label{pro:gamma-is-not-primitive-or-power}
Let $H$ be a genus two handlebody and $\gamma\subset \partial H$ a curve non-trivial in $H$. The surface $\partial H - \gamma$ compresses in $H$ iff $\gamma$ is a primitive or a power circle in $H$
\end{proposition}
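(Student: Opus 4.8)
The plan is to first reformulate the compressibility condition topologically and then prove the two implications separately, with the converse being the delicate one. By definition, \emph{$\partial H-\gamma$ compresses in $H$} means there is a properly embedded disk $D\subset H$ with $\partial D\subset\partial H-\gamma$ and $\partial D$ essential in $\partial H-\gamma$ (it neither bounds a disk in $\partial H-\gamma$ nor is parallel to $\gamma$). My first observation is that, since $\gamma$ is non-trivial in $H$, any such $\partial D$ is automatically essential on the \emph{closed} surface $\partial H$: if $\partial D$ bounded a disk $E\subset\partial H$, then essentiality in $\partial H-\gamma$ would force $\gamma\subset\mathrm{int}(E)$, whence $\gamma$ bounds a disk in $\partial H$ and $[\gamma]=1$ in $\pi_1(H)$, a contradiction. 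Thus the statement ``$\partial H-\gamma$ compresses'' is equivalent to the existence of an \emph{essential meridian disk of $H$ that is disjoint from $\gamma$ and not parallel to $\gamma$}, and this is the object I will manufacture or exploit in each direction.

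For the forward implication I would take such an essential meridian disk $D$ and cut $H$ along it. Because $\gamma$ is connected and disjoint from $D$, it lies on the boundary of a single solid-torus component $W$ of the cut manifold: if $D$ is non-separating, $H$ cut along $D$ is one solid torus, and if $D$ is separating it is a union of two solid tori, one of which carries $\gamma$ on its boundary. In either case the inclusion $W\hookrightarrow H$ sends a generator $\alpha$ of $\pi_1(W)=\mathbb{Z}$ to a free-factor generator of $\pi_1(H)$, hence to a primitive element, so $[\gamma]_H=\alpha^{m}$ where $m$ is the winding number of $\gamma$ about the core of $W$. Since $\alpha\neq1$, the value $m=0$ would make $[\gamma]_H$ trivial, contradicting non-triviality; therefore $m=\pm1$, in which case $\gamma$ is primitive, or $|m|\ge2$, in which case $\gamma$ is a power circle. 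This settles the ``only if'' direction cleanly.

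For the converse I must turn the algebraic hypothesis into an actual disjoint disk, and the cleanest route is to reduce both cases to a standard model. If $\gamma$ is primitive, extend $[\gamma]$ to a basis $\{[\gamma],\beta\}$ of $\pi_1(H)$; if $\gamma$ is a power circle, write $[\gamma]=\alpha^{n}$ with $\alpha$ indivisible and $n\ge2$ (and check that the root $\alpha$ of an embedded curve may be taken primitive). In each case I would produce a homeomorphism of $H$ carrying $\gamma$ to a \emph{standard} curve on $\partial H$ — the core curve $x$ of the first handle in the primitive case, or the standard embedded curve of class $x^{n}$ in the power case — arranged so that the meridian $m_2$ of the second handle is disjoint from it. Since $m_2$ is essential on $\partial H$ and not parallel to the standard curve, its preimage is then the desired compressing disk for $\partial H-\gamma$.

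The step I expect to fight hardest is exactly this standardization: upgrading ``$[\gamma]$ is primitive (resp.\ a proper power)'' to ``$\gamma$ is geometrically standard''. Knowing the homotopy class is not enough, since many isotopy classes of embedded curves carry the same class. I would attack this with the usual machinery for curves on handlebody boundaries: place $\gamma$ in minimal position against a complete meridian system $\{D_1,D_2\}$, read off its cyclic word, and shorten that word by handle slides realized by homeomorphisms of $H$ (using that the handlebody group surjects onto $\mathrm{Out}(\pi_1 H)$), while the loop theorem promotes the algebraic intersection number $1$ coming from primitivity to a geometric intersection number $1$. The care required is to verify that every such move keeps $\gamma$ embedded and preserves its class, so that the resulting disk is genuinely disjoint from $\gamma$ and essential in $\partial H-\gamma$; it is this bookkeeping, rather than any single hard idea, where the real content of the converse lies.
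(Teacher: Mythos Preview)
The paper does not give its own proof of this proposition: it is quoted from \cite{luis1} (their Prop.~3.3) and used as a black box. So there is no in-paper argument to compare against, and the question is simply whether your proposal stands on its own.

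Your forward implication is correct and cleanly argued. The reformulation of ``$\partial H-\gamma$ compresses'' as ``there is an essential meridian disk of $H$ disjoint from $\gamma$'' is valid (note that $\partial D$ parallel to $\gamma$ would force $[\gamma]=1$), and cutting along such a disk immediately exhibits $[\gamma]$ as a power of a free-factor generator.

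The converse, however, is only a sketch, and you say so yourself. Two concrete gaps remain. First, the parenthetical ``check that the root $\alpha$ of an embedded curve may be taken primitive'' hides real content: in $F_2$ an indivisible element need not be primitive (e.g.\ $[x,y]$), so you must actually prove that when a \emph{simple} curve on $\partial H$ represents a proper power $\alpha^n$, the root $\alpha$ is primitive. This is true, but it is essentially equivalent to the companion-annulus characterization (Proposition~\ref{pro:gamma-has-companion-annulus-condition}) and is not a routine verification. Second, the ``standardization'' step---producing a homeomorphism of $H$ carrying $\gamma$ to a model curve disjoint from a fixed meridian---is asserted but not carried out; the remark that the handlebody group surjects onto $\mathrm{Out}(F_2)$ does not by itself control the \emph{isotopy} class of $\gamma$ on $\partial H$, as you note. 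The usual route here is to prove directly that a primitive simple closed curve meets some meridian disk in a single point (via Whitehead/Zieschang-type arguments on the cyclic word of $\gamma$), and then deduce the existence of a disjoint meridian; your outline gestures at this but does not supply it. As written, the converse is a reasonable plan rather than a proof.
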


\begin{proposition}[L. Valdez, \cite{luis1}] \label{pro:gamma-has-companion-annulus-condition}
Let $H$ be a genus two handlebody and $\gamma\subset \partial H$ a curve non-trivial in $H$. Then $\gamma$ has a companion annulus in H iff $\gamma$ is a power circle in $H$. In this case, there is a unique non-separating compressing disk $D$ for $H$ with $\partial D$ disjoint from $\gamma$. 
\end{proposition}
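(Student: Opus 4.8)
The plan is to establish the three assertions separately — that a companion annulus forces a power circle, that a power circle carries a companion annulus, and that the distinguished non-separating disk is unique — using throughout that $\pi_1(H)\cong F_2$ is free of rank two, so that every abelian subgroup is infinite cyclic and every element has a unique primitive root. I read ``companion annulus'' as \emph{essential}: since the core of $A$ is freely homotopic to $\gamma\neq 1$, such an $A$ is automatically incompressible, and the substantive hypothesis is that $A$ is not boundary-parallel (a boundary-parallel annulus exists for every nontrivial $\gamma$ and must be excluded for the equivalence to hold).

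For the forward implication, let $A$ be an essential companion annulus and $B\subset\partial H$ the annulus it cobounds, so that $T=A\cup B$ is an embedded torus in $H$. Since $\mathbb{Z}^2$ does not embed in $F_2$, $T$ is compressible; let $E$ be a compressing disk, with $\partial E$ essential on $T$. Writing $\lambda$ for the slope carried by the core of $A$ (so $[\lambda]=[\gamma]$) and $\mu$ for a dual slope, the image of $\pi_1(T)$ in $F_2$ is cyclic, say $\langle\beta\rangle$, with $\lambda\mapsto\beta^{p}$; thus $[\gamma]=\beta^{p}$. The nullhomotopic slope $\partial E$ is primitive in $H_1(T)$ and lies in the kernel, and a short computation shows it meets $\lambda$ exactly $|p|/\gcd(p,q)$ times, where $\mu\mapsto\beta^{q}$. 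If $|p|=1$ this number is one, so compressing $T$ along $E$ realizes $T$ as the boundary of a solid torus in which $A$ has longitudinal core, i.e.\ $A$ is boundary-parallel, contrary to hypothesis. Hence $|p|\geq 2$ and $\gamma$ is a power circle.

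For existence of the disk and the converse, I start from Proposition~\ref{pro:gamma-is-not-primitive-or-power}: as $\gamma$ is a power circle, $\partial H-\gamma$ compresses, producing a compressing disk disjoint from $\gamma$. If this disk separates, it splits $H$ into two solid tori with $\gamma$ in the boundary of one, and a meridian of the other is a non-separating compressing disk disjoint from $\gamma$; so in all cases a non-separating $D$ disjoint from $\gamma$ exists. Cutting along $D$ yields a solid torus $W$ with $H=W\cup(\text{1-handle})$, whence $\pi_1(H)=\pi_1(W)*\mathbb{Z}$ and the core $c_W$ of $W$ is primitive; comparing $[\gamma]=c_W^{m}$ with $[\gamma]=\alpha^{n}$ and invoking uniqueness of roots gives $c_W\sim\alpha^{\pm1}$ and $|m|=n\geq 2$. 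Thus $\gamma$ is an $(n,q)$-curve on $\partial W$, and its cabling annulus — the vertical essential annulus of the Seifert fibering of $W$ with one exceptional fiber of order $n$ — is an essential companion annulus for $\gamma$ in $H$ (one checks it remains non-boundary-parallel in $H$). This proves the equivalence.

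The main obstacle is uniqueness. The key observation is that every non-separating compressing disk $D'$ disjoint from $\gamma$ can be isotoped off the companion annulus $A$: an intersection arc of $D'\cap A$ would have its endpoints on $\partial D'\cap\partial A$, which is empty because $\partial A$ consists of copies of $\gamma$ and $\partial D'$ avoids $\gamma$, so $D'\cap A$ consists only of circles, which are removed using incompressibility of $A$ and irreducibility of $H$. Thus $D'$ lies in $H$ cut along $A$, which is the disjoint union of a solid torus $N$ (the companion, whose meridian meets $\gamma$ with multiplicity $\geq 2$) and a complementary piece $M$. No essential disk of $H$ lies in $N$, since a disk there would have boundary in the annulus $\partial N\cap\partial H$ with core $\gamma$ and would either be inessential or force $[\gamma]=1$; hence $D'\subset M$. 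Inside $M$ the curve $\gamma$ becomes primitive (a once-wrapping core), and I expect the proof to conclude by showing that the requirement of being non-separating \emph{in $H$} eliminates every class of disk in $M$ except the one dual to the attached handle, which is exactly $D$. Pinning down this last step — identifying $M$ precisely and classifying the admissible disks in it — is where I anticipate the real work, since the naive strategy of isotoping two disks disjoint from $\gamma$ into disjoint position fails: such disks need not be disjoinable, and the disk complex of $H$ is not complete.
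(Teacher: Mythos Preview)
The paper does not supply a proof of this proposition: it is quoted verbatim from \cite{luis1} (see the attribution ``L.~Valdez, \cite{luis1}'' and the absence of any argument following the statement), so there is no in-paper proof to compare your attempt against.

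On its own merits, your argument for the equivalence is sound. One small clarification strengthens the forward direction: the region $N$ with $\partial N=A\cup B$ is \emph{always} the solid torus you need. Indeed, $A\cup B$ must compress in the handlebody, but a compressing disk on the $M$-side would have boundary on $A$ (the only part of $A\cup B$ visible from $M$), contradicting incompressibility of $A$; hence the compression is into $N$, and $N$ is a solid torus whose meridian you then analyze. Your converse via Proposition~\ref{pro:gamma-is-not-primitive-or-power} and the cabling annulus is fine.

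The genuine gap is exactly the one you flag: uniqueness of the non-separating disk. Your instinct that the outermost-bigon surgery is insufficient is correct --- surgering $D$ along a bigon of $D\cap D'$ produces disks disjoint from $\gamma$ with fewer intersections, but can change the isotopy class of $D$, so this only shows the relevant disk set is connected, not that it is a single class; and two non-separating disks in a genus-two handlebody genuinely need not be disjoinable. To finish, one workable route is: first prove the companion solid torus $N$ is unique up to isotopy (two companion annuli can be made disjoint, and the solid tori they cut off are then nested, forcing equality since each wraps $\geq 2$ times). Then every candidate disk lies in $M=\overline{H\setminus N}$, where $\gamma$ is primitive; attaching a $2$-handle to $M$ along $\gamma$ yields a solid torus $M'$ in which each candidate becomes a meridian disk, and one checks that the isotopy of meridian disks in $M'$ can be taken rel the cocore of the $2$-handle (which is a boundary-parallel arc in $M'$ since its complement $M$ is a handlebody), giving an isotopy in $M$. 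Making this last step precise is the content you are missing.
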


\begin{proposition}[L. Valdez, Lem 3.5(2) \cite{luis1}]\label{pro:lem_3.5_part2}
Let $H$ be a genus two handlebody and $\gamma, \gamma'\subset \partial H$ a pair of disjoint circles. Let $M=V\cup_\gamma H \cup_{\gamma'}V'$ be a manifold obtained by gluing solid tori $V$, $V'$ to $H$ along disjoint annular neighborhoods $A=\partial H \cap \partial V$ and $A'=\partial H\cap \partial V'$ of $\gamma$ and $\gamma'$, respectively, where each annulus $A$, $A'$ runs at least twice around $V$, $V'$, respectively. Then $M$ is a genus two handlebody if and only if $\gamma$ and $\gamma'$ are basic circles in $H$. 
\end{proposition}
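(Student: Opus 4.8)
Throughout, write $A=\partial H\cap\partial V$ and $A'=\partial H\cap\partial V'$ for the two gluing annuli. We may assume $\gamma,\gamma'$ are essential in $H$: if, say, $\gamma$ bounded a disk in $H$, then the $n$-fold wrapping of $A$ around $V$ would make $\pi_1 M$ have torsion, so $M$ would not be a handlebody, while $\gamma$ is of course not basic. Since $A,A'$ wrap $n\ge 2$, $m\ge 2$ times around $V,V'$ they are incompressible there, and since $\gamma,\gamma'$ are essential they are incompressible in $H$; hence $M$, a union of irreducible pieces along incompressible annuli, is irreducible, has $\chi(M)=-1$, and $\pi_1 M=\langle x,y,v,w\mid x=v^n,\ y=w^m\rangle\cong\langle v,w\rangle$ is free of rank two, where $x=[\gamma]$, $y=[\gamma']$ and $v,w$ are cores of $V,V'$. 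Note that freeness of $\pi_1$ together with irreducibility do not by themselves force $M$ to be a handlebody---for instance a twisted $I$-bundle over a once-punctured non-orientable surface is irreducible with free $\pi_1$ but is not a handlebody---so the proposition genuinely ties the handlebody property to the position of $\gamma,\gamma'$ on $\partial H$.

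\textbf{Basic $\Rightarrow$ handlebody.} Assume $\gamma,\gamma'$ are basic. Since the disjoint pair $\{\gamma,\gamma'\}$ represents a basis of $\pi_1 H$, there is a separating meridian disk $E\subset H$ disjoint from $\gamma\cup\gamma'$ splitting $H$ into solid tori $H_1\supset\gamma$ and $H_2\supset\gamma'$ in which $[\gamma]$, $[\gamma']$ generate $\pi_1 H_1$, $\pi_1 H_2$ respectively. Cutting $M$ along $E$ yields $(H_1\cup_A V)\sqcup(H_2\cup_{A'}V')$. The piece $H_1\cup_A V$ is a union of two solid tori along $A$, which is incompressible in $V$ (wrapping $n\ge 2$ times) and in $H_1$ (its core $\gamma$ generates $\pi_1 H_1$); so $H_1\cup_A V$ is irreducible with torus boundary and $\pi_1=\langle h_1,v\mid h_1=v^n\rangle\cong\mathbb Z$, whence by the recognition of the solid torus it is $S^1\times D^2$, and likewise for $H_2\cup_{A'}V'$. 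Thus $M$ is a boundary connected sum of two solid tori, i.e.\ a genus-two handlebody.

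\textbf{Handlebody $\Rightarrow$ basic.} Assume $M$ is a genus-two handlebody. Suppose for the moment that $H$ carries a separating meridian disk $E$ disjoint from $A\cup A'$, with complementary solid tori $H_1,H_2$, and say $\gamma\subset H_1$ with $[\gamma]=h_1^{j}$ in $\pi_1 H_1\cong\mathbb Z$. Cutting $M$ along $E$ gives $M=(H_1\cup_A V)\cup_E(H_2\cup_{A'}V')$, so by van Kampen $\pi_1(H_1\cup_A V)=\langle h_1,v\mid h_1^{j}=v^n\rangle$ is a free factor of the free group $\pi_1 M$; but for $|j|\ge 2$ that group is non-abelian with non-trivial centre, and for $j=0$ it has torsion, so in neither case can it be $\{1\}$, $\mathbb Z$, or free---hence $|j|=1$ and $[\gamma]$ generates $\pi_1 H_1$. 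The same reasoning applies to $\gamma'$, and an analogous computation rules out $\gamma$ and $\gamma'$ lying in the same $H_i$; therefore $\{[\gamma],[\gamma']\}$ is a basis of $\pi_1 H=\pi_1 H_1\ast\pi_1 H_2$, i.e.\ $\gamma,\gamma'$ are basic. The real work in this direction is thus to produce the disk $E$: take a complete meridian disk system $\Delta$ of $M$, isotope it to meet $A\cup A'$ minimally (innermost-circle and outermost-arc surgeries, using incompressibility of $A,A'$), and show that the way $\Delta$ is forced to thread through the $n$- and $m$-times wrapped annuli leaves a residual disk in $H$ separating $\gamma$ from $\gamma'$.

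The main obstacle is precisely this last step: extracting the separating disk $E$ from the handlebody structure of $M$. The hypothesis $n,m\ge 2$ is indispensable here---if $A$ wrapped only once around $V$, then $V$ would be a collar, $M$ would be homeomorphic to $H$, and so $M$ would be a handlebody regardless of how $\gamma$ sits in $H$---so the argument must exploit the at-least-twice wrapping through a careful count of the spanning arcs of $\Delta\cap(A\cup A')$ against the meridian disks of $V$ and $V'$.
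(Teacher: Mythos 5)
This proposition is quoted by the paper from Valdez-S\'anchez \cite{luis1} (Lemma 3.5(2)) and is not proved in the text, so there is no internal proof to compare against; judged on its own, your attempt has a genuine gap in each direction. In the direction ``handlebody $\Rightarrow$ basic,'' everything you actually prove is conditional on the existence of a separating meridian disk $E\subset H$ disjoint from $A\cup A'$. Such a disk does not exist for an arbitrary pair $(\gamma,\gamma')$ --- for instance it fails whenever $\gamma$ is not conjugate into a rank-one free factor of $\pi_1H$ --- so producing $E$ from the hypothesis that $M$ is a handlebody is not a technical afterthought but the entire content of the lemma. You say so yourself (``the main obstacle is precisely this last step'') and offer only a strategy (put a complete disk system of $M$ in minimal position with respect to $A\cup A'$ and analyze the spanning arcs); no part of that analysis is carried out, and it is exactly where the hypothesis $n,m\ge 2$ must enter. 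As written, this direction is a plan, not a proof.

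The direction ``basic $\Rightarrow$ handlebody'' also rests on an unproved assertion: that a \emph{disjoint} pair of simple closed curves on $\partial H$ whose classes form an algebraic basis of $\pi_1H$ can be separated by a meridian disk $E$ with each curve longitudinal in its complementary solid torus. This is the statement that an algebraic basis realized by disjoint embedded curves is geometrically standard; it is true but is itself a theorem (of Zieschang/Gordon type, and essentially part of what \cite{luis1} establishes about basic pairs), not something that follows from the definition of ``basic circles'' given in this paper. If you grant it, the rest of that direction is fine: your recognition of $H_i\cup V$ as a solid torus via irreducibility, torus boundary and $\pi_1\cong\mathbb Z$ is correct, and the boundary connected sum along $E$ gives the genus-two handlebody. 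So the forward direction needs a citation or proof of the geometric realization step, and the converse direction needs the disk-system argument actually executed.
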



\subsection{Simple pairs}
This section will discuss some results about pairs $(H,J)$ where $H$ is a genus two handlebody and $J \subset \partial H$ is an essential separating circle. We will say that a pair $(H, J)$ is
\begin{enumerate}
    \item \emph{trivial} if it is homeomorphic to the pair $(T \times I, \partial T \times \{0\})$ 
    \item \emph{minimal} if any once-punctured torus $T$ in $H$ with $\partial T = J$ is parallel to $\partial H$ relative to $J$; in particular any trivial pair is minimal. 
\end{enumerate}

Now we define a special family of pairs $(H,J)$ introduced in  \cite{tsutsumi}. 

\begin{definition}
A pair of type $(p_0,p_1)$ is a tuple $(H,\gamma)$ constructed as follows:
\begin{itemize}
    \item Take $T$ a once-punctured torus and $(a_0, a_1)$ a pair of simple closed curves that form a basis for $\pi_1(T)$.
    \item Take $(V_0, c_0)$ and $(V_1, c_1)$ two framded solid tori.  Where $c_i$ is a curve on $\partial V_i$ that goes $p_i$ times around the core of $V_i$ for $i=0,1$.
\end{itemize}

We construct $H$ as the space resulting from the identification of a regular neighborhood of $c_i \subset \partial V_i$ with one of $a_i \times \{i\} \subset \partial(T\times [0,1])$ for $i=0,1$. We take $\gamma = \partial T \times \{1/2\} \subset H$. In  \cite{tsutsumi} is shown that $H$ is a 2-handle body.



\end{definition}

In the above definition, observe that a pair of type $(1,1)$ is a trivial pair, and a type $(1,p)$ pair is homeomorphic to the gluing of just one solid torus along a primitive curve on $T \times \{1\}$.

The pairs $(1,p)$ appear naturally as part of pairs $(H,J)$ with multiple once-punctured tori as the following proposition states.

\begin{proposition}[L. Valdez, Rem 3.8 and Cor 3.10 of \cite{luis1}]\label{pro:lem_3.10}
Let $(H,J)$ be a pair with $H$ a genus two handlebody and $J\subset \partial H$ a curve separating $\partial H$ into two tori $T_1$, $T_2$. Suppose that $T'_1,T'_2$ are incompressible tori with boundary in $J$ dividing $H$ into non-product regions as follows $$H=H_1\cup_{T'_1} \cup H_0\cup_{T'_2} H_2.$$ Then $(H_1,J)$ and $(H_2,J)$ are pairs of type $(1,p_1)$ and $(1,p_2)$ for some $p_1,p_2>1$. 
\end{proposition}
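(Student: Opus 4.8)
The plan is to pin down the homeomorphism type of each outer piece $(H_i,J)$ and then recognize it, via the structure theory of \cite{luis1}, as a pair of type $(1,p_i)$. I would begin with the topology of the three pieces. Since $T_1'$ and $T_2'$ are incompressible in the handlebody $H$, the Seifert--van Kampen theorem exhibits $\pi_1(H)$ as a graph of groups with vertex groups $\pi_1(H_1)$, $\pi_1(H_0)$, $\pi_1(H_2)$, so each $\pi_1(H_i)$ embeds in $\pi_1(H)$ and hence is free. Each $H_i$ is a submanifold of the irreducible $H$ with incompressible frontier, so it is itself irreducible, and its boundary is connected: $\partial H_1=T_1\cup_J T_1'$, $\partial H_0=T_1'\cup_J T_2'$, $\partial H_2=T_2\cup_J T_2'$, in each case a closed orientable surface of Euler characteristic $-2$. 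A compact orientable irreducible 3-manifold with nonempty boundary and free fundamental group is a handlebody, so each $H_i$ is a genus two handlebody; since $J$ is essential in $\partial H_i$, each $(H_i,J)$ is a pair in the sense of Section~2.

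Next I would show that for the two outer pieces both of the once-punctured tori cut off by $J$ are incompressible; for $H_1$ this reduces to showing that $T_1\subset\partial H$ is incompressible in $H_1$, as $T_1'$ is incompressible by hypothesis. If $T_1$ compressed in $H_1$, then compressing $T_1$ along such a disk would yield a properly embedded disk in $H_1$ bounded by $J$: the compressing disk itself when its boundary is parallel to $J$ in $T_1$, and the result of the compression when its boundary is nonseparating in $T_1$. But $J=\partial T_1'$ is a commutator of a basis of $\pi_1(T_1')$, hence nontrivial there, and it stays nontrivial in $\pi_1(H_1)$ because $T_1'$ is incompressible, so it cannot bound a disk. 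Thus both components of $\partial H_1\setminus J$ are incompressible, and likewise for $H_2$; by Proposition~\ref{pro:gamma-is-not-primitive-or-power}, $J$ is then neither primitive nor a power circle in $H_1$ or $H_2$ (nor in $H_0$, where $T_1',T_2'$ are incompressible by hypothesis).

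It remains to identify $(H_1,J)$, a non-product pair with $H_1$ a genus two handlebody, both components of $\partial H_1\setminus J$ incompressible, and --- in the situation at hand, where $T_1',T_2'$ realize a decomposition admitting no further non-product cut --- every properly embedded incompressible once-punctured torus with boundary $J$ boundary-parallel, as a pair of type $(1,p_1)$. This is Valdez's classification, \cite[Lem.\ 3.5 and Rem.\ 3.8]{luis1}. The idea: $H_1$ has a nonseparating compressing disk $D$, and since $T_1$ and $T_1'$ are incompressible $\partial D$ must meet $J$; using that any two disjoint essential arcs of a once-punctured torus are parallel, and that removing a superfluous intersection could only be done by compressing $T_1$ or $T_1'$ or by creating a once-punctured torus that is not boundary-parallel, one arranges $\lvert\partial D\cap J\rvert=2$; cutting $H_1$ along $D$ leaves a solid torus, and reassembling it along the two essential arcs that $D$ cuts on $T_1$ and $T_1'$ presents $(H_1,J)$ as $T\times I$ with a single solid torus attached to $T\times\{1\}$ along a primitive curve, winding $p_1$ times --- a pair of type $(1,p_1)$; here Propositions~\ref{pro:gamma-is-not-primitive-or-power}--\ref{pro:lem_3.5_part2} are what certify that the reassembled manifold is a handlebody and that the attaching curve is primitive. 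Finally, a type $(1,1)$ pair is the trivial pair $(T\times I,\partial T\times\{0\})$, a product region, so from $H_1$ and $H_2$ being non-product we conclude $p_1,p_2>1$. I expect this last identification to be the main obstacle --- controlling how a compressing disk of $H_1$ meets $J$ and the two incompressible boundary tori, and recognizing the cut-open manifold as $T\times I$ with one attached solid torus --- which is the technical core of \cite[\S3]{luis1} and which a self-contained argument here would have to reproduce.
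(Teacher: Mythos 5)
First, a point of reference: the paper does not prove Proposition~\ref{pro:lem_3.10} at all --- it is imported verbatim from \cite{luis1} (Rem.~3.8 and Cor.~3.10 there) --- so there is no in-paper argument to compare your attempt against. Judged on its own terms, your first two stages are correct and would belong to any complete proof: each $H_i$ is a genus two handlebody (incompressible frontier, hence injective and therefore free fundamental group, hence a handlebody by irreducibility), and both once-punctured tori of $\partial H_1\setminus J$ are incompressible in $H_1$, since a compression of $T_1$ would produce a properly embedded disk bounded by $J$ while $[J]$ is a nontrivial commutator in the incompressible $T_1'$. Via Proposition~\ref{pro:gamma-is-not-primitive-or-power} this also shows $J$ is neither primitive nor a power circle in $H_1$.

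The gap is the final identification of $(H_1,J)$ as a pair of type $(1,p_1)$, which is the entire content of the proposition and which you do not prove: you cite \cite[Lem.~3.5, Rem.~3.8]{luis1} --- essentially the result being restated --- and give only a heuristic sketch. The sketch cannot be completed from the facts you established, because those facts are purely intrinsic to $(H_1,J)$ and they do not characterize type $(1,p)$ pairs: a pair of type $(2,3)$, for instance, is a genus two handlebody with $J$ separating, both complementary once-punctured tori incompressible, and non-product, yet it is not of type $(1,p)$. So any correct argument must use the ambient structure --- that $H_1$ is glued along $T_1'$ to the rest of the genus two handlebody $H$ --- via something like Proposition~\ref{pro:lem_3.5_part2}, and your sketch never invokes this. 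Two further specific problems: you quietly insert the hypothesis that every incompressible once-punctured torus in $H_1$ with boundary $J$ is boundary-parallel, which is not among the stated hypotheses (the $T_i'$ are only assumed to cut $H$ into non-product pieces, not to be a maximal family; if you want this you must derive it, e.g.\ from Lemma~\ref{lem:tsustumi} and the fact that $T_1,T_1',T_2',T_2$ already realize the bound of four); and the passage from ``cut along a nonseparating disk meeting $J$ twice and reassemble'' to ``$T\times I$ with a \emph{single} solid torus attached along a \emph{primitive} curve'' is exactly the point at issue and is asserted rather than argued. As written, the proposal is a correct reduction plus a citation, not a proof.
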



\begin{lemma}\label{lem:simple_pair_annulus}
Let $(H,\gamma)$ be a pair of type $(1,p)$ with $p>1$. Let $T_0$ and $T_1$ be the once-punctured tori in $\overline{\partial H-\gamma}$ and let $\rho_i\subset T_i$ be a non-separating simple closed curve. If $\rho_0$ and $\rho_1$ cobound an embedded annulus in $H$, then $\rho_i$ is a power circle in $H$. 
\end{lemma}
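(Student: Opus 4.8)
We have a pair $(H,\gamma)$ of type $(1,p)$, which by the remark after the definition is obtained from $T\times[0,1]$ by attaching a single solid torus $V_1$ along a regular neighborhood of a curve $c_1$ on $T\times\{1\}$ that wraps $p\ge 2$ times around $V_1$, where $c_1$ sits inside $T\times\{1\}$ as a primitive (indeed, one of the basis curves $a_1$). Write $W=T\times[0,1]$ and note $T_0=T\times\{0\}$, while $T_1$ is the once-punctured torus $\overline{(T\times\{1\})\setminus (\text{nbhd of }c_1)}$. Let $\alpha$ be the core of $V_1$, so $[c_1]=\alpha^p$ in $\pi_1(V_1)$; in $\pi_1(H)$ we then have a presentation coming from $\pi_1(T)\ast_{\langle a_1=\alpha^p\rangle}\langle\alpha\rangle$, i.e. $\pi_1(H)=\langle a_0,\alpha\mid\rangle$ is free of rank two with $a_1=\alpha^p$.

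**Main steps.**
First I would isotope the given annulus $A$ with $\partial A=\rho_0\cup\rho_1$ to be transverse to the properly embedded vertical annulus (or rather to the meridian disk system) distinguishing the $V_1$ part from the product part $W$, and cut $A$ along the intersection with $\partial V_1\cap H$ (a pair of annuli, i.e., the frontier of $V_1$ in $H$). The key homological/combinatorial input is to control how $A$ meets $V_1$: since $\rho_1\subset T_1$ and $\rho_0\subset T_0$ lie in the product part, the pieces of $A$ inside $V_1$ are a collection of meridian disks and boundary-parallel annuli of $V_1$; after an innermost/outermost argument (removing trivial circles of intersection by irreducibility, and removing boundary-parallel arcs), one reduces to the case where $A\cap V_1$ is a union of meridian disks of $V_1$ and $A\cap W$ is a collection of essential annuli in the product $T\times[0,1]$. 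Now I use the structure of incompressible, $\partial$-incompressible annuli in $T\times[0,1]$: each such annulus is vertical, i.e., isotopic to $\delta\times[0,1]$ for some essential simple closed curve $\delta\subset T$, or is $\partial$-parallel. Tracking the boundary slopes: $\rho_0$ is some curve $\delta\times\{0\}$, and following the chain of vertical annuli and meridian disks of $V_1$ across to $T_1$, the curve $\rho_1$ must be, up to isotopy in $T_1$, the curve obtained from $\rho_0$ by "passing through" $V_1$.

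**The crucial computation.**
The heart of the matter is: if $A$ meets $V_1$ in $k\ge 1$ meridian disks, then the annulus forces $\rho_0$ (as an element of $\pi_1(T)\subset\pi_1(H)$) to be conjugate to a power of a curve that runs through $\alpha$. Concretely, reading along $A$ one gets that $[\rho_0]$ in $\pi_1(H)=\langle a_0,\alpha\rangle$ is conjugate to a word built from $\alpha^{\pm p}$ (each time $A$ crosses the frontier of $V_1$ we pick up a band running $p$ times around $\alpha$) and arcs of the product region; an innermost disk of $A\cap V_1$ that is outermost in $A$ gives a $\partial$-compression or else shows $\rho_0$ bounds in $W\cup(\text{one disk})$ an annulus to $\alpha^p$. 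I expect to conclude that $[\rho_0]$ is conjugate into $\langle\alpha^p\rangle$ or more precisely equals $\beta^m$ for some $\beta$ with $m\ge 2$ (the power coming either from $p\ge 2$ directly, or from the annulus wrapping multiply). Because $\rho_0$ is non-separating in $T_0$, it is primitive in $\pi_1(T)$, so in $\pi_1(W)$ it is primitive; the only way it can become a proper power in $\pi_1(H)$ is via the amalgamation over $a_1=\alpha^p$, which is exactly the assertion that $\rho_0$ (hence symmetrically $\rho_1$) is a power circle in $H$. I would then invoke Proposition~\ref{pro:gamma-has-companion-annulus-condition} / Proposition~\ref{pro:gamma-is-not-primitive-or-power} to phrase the conclusion cleanly, and note the case $k=0$ (annulus entirely in $W$) is impossible since then $\rho_0,\rho_1$ would be parallel in $T\times\{0,1\}$, contradicting that they lie in tori separated by $\gamma$ — or rather it forces $\rho_1$ isotopic into $T_0$ across $W$, which can be ruled out by how $c_1$ sits.

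**Expected main obstacle.**
The main difficulty is the bookkeeping in the cut-and-paste step: showing that after maximal simplification $A\cap V_1$ consists only of meridian disks (no essential annuli in $V_1$, no boundary-parallel pieces surviving) and that $A\cap W$ consists only of vertical annuli, using incompressibility of $A$ together with irreducibility of $H$ and the product structure of $W$ — and then correctly extracting the fundamental-group word to see the power. A secondary subtlety is ruling out the degenerate configurations (the annulus $A$ being $\partial$-parallel in $H$, or $\rho_0$ being isotopic to $\gamma$-adjacent curves), which should follow from $\rho_i$ being non-separating in $T_i$ together with $p\ge 2$. I would organize the write-up so that the topological normalization is one paragraph and the $\pi_1$ power-extraction is the second, keeping the routine innermost-disk arguments compressed.
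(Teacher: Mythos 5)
Your proposal takes a topological route (normalizing the annulus with respect to the frontier of the attached solid torus $V_1$ and reading off a $\pi_1$ word), whereas the paper argues purely algebraically: cobounding an annulus gives a free homotopy, hence a conjugacy $\iota_0^*(\rho_0)=\omega\,\iota_1^*(\rho_1)\,\omega^{-1}$ in the free group $\pi_1(H)=\langle b_1,\lambda\rangle$ with $c_1\mapsto\lambda^p$, $b_2\mapsto b_1\lambda^r$, $\gcd(p,r)=1$; writing $\rho_0,\rho_1$ as cyclically reduced primitive words (so all exponents of each generator have the same sign) and comparing syllables forces all $b_2$-exponents of $\rho_1$ to vanish, so $\rho_1=\lambda^{px}$ is a power. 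Your plan could in principle be pushed through, but as written it has a genuine gap exactly where the content of the lemma lies: the ``crucial computation'' paragraph asserts, rather than proves, that tracking the pieces of $A$ yields that $[\rho_0]$ is a proper power (``I expect to conclude\dots''), and the closing remark that ``the only way it can become a proper power is via the amalgamation'' describes what must be shown rather than showing it. Nothing in the sketch rules out the a priori possibility that $\rho_0$ is primitive in $\pi_1(H)$ (e.g.\ $\rho_0=b_1$) yet still cobounds an annulus with some $\rho_1$; excluding that is the whole point, and it is precisely what the paper's word-combinatorics (using $p>1$ and $\gcd(p,r)=1$) accomplishes.

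There are also problems in the normalization step. The frontier of $V_1$ in $H$ is a single properly embedded annulus $N$ with $\partial N\subset T_1$, not ``a pair of annuli,'' and since $\rho_1$ may run over $\partial V_1\cap\partial H$, the intersection $A\cap N$ generically contains arcs as well as circles, so the pieces of $A\cap V_1$ are surfaces with corners on $\partial H$ and on $N$, not simply meridian disks or $\partial$-parallel annuli of $V_1$. In fact, in the configurations that actually realize the hypothesis (the paper's conclusion forces $\rho_1$ isotopic in $T_1$ to the core of $N$), the annulus $A$ can be taken disjoint from $V_1$ or meeting it only in $\partial$-parallel pieces, so the reduction ``to the case where $A\cap V_1$ is a union of meridian disks'' is not available. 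Your aside that the case $A\cap V_1=\emptyset$ ``can be ruled out'' is also off: that case does occur and leads directly to the conclusion (both $\rho_i$ isotopic to $a_1$, hence to $\lambda^{p}$), not to a contradiction. If you want a complete proof along these lines you would need to carry out the cut-and-paste honestly and then still perform essentially the free-group computation the paper does; it is cleaner to pass to free homotopy immediately and argue with cyclically reduced words as the paper does.
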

\begin{proof}
We will show that if $\rho_1$ and $\rho_0$ are freely homotopic in $H$, then $\rho_1$ is a power circle in $H$. 

Using the notation above, identify $T_0$ with $T\times \{0\}$ and $T_1$ with $\left(T\times \{1\}-\eta(c\times \{1\}) \right)\cup \left(\partial V -\eta(\alpha)\right)$. 
Let $*\in \partial H$ be a point in a boundary component of $\eta(c\times \{1\})=\eta(\alpha)$. We think of $c_1=c\times \{1\}$ and $\alpha$ as loops based at $*$. 
Let $b\subset T$ be a simple closed curve dual to $c$. We can pick $b$ so that $(b\times \{1\})\cap (c\times \{1\})=\{*\}$ and $b_1=b\times \{1\}$ is based at $*$. This way, $\pi_1(T\times [0,1],*)=\pi_1(T_0,*)=\langle b_1, c_1\rangle$. 
On the other hand, let $\delta\subset \partial V$ be a simple closed curve intersecting $\alpha$ at $*$. Let $b_2$ be the loop in $T_1$ based at $*$ obtained by following $b_1$ until reaching the other boundary of $\eta(c_1)$ and then following $\delta$ until reaching $*$. By construction, $b_2$ is dual to $\alpha$ in $T_1$ and so $\pi_1(T_1,*)=\langle \alpha, b_2\rangle$. 

Let $\lambda\subset V$ be a longitude of $V$ based at $*$. With this notation, $\delta = \lambda^r$, $\alpha = \lambda^p$ for some $r\neq 0$. In particular, $r$ and $p$ are relatively prime. Denote the embeddings of $F_i$ into $H$ by $\iota_i$. By construction, the following equations hold in $\pi_1(H,*)$: $\iota_0^*(c_1)=\lambda^p$, $\iota_0^*(b_1)=b_1$, $\iota_1^*(\alpha)=\lambda^p$, and $\iota_1^*(b_2)=b_1\lambda^r$. 
Via Seifert-Van Kampen Theorem, we obtain that
$\pi_1(H,*)=
\langle b_1, \lambda\rangle$. 

We are ready to discuss the proof of the Lemma. Pick cyclically reduced words representing $\rho_i\in \pi_1(T_i,*)$ as follows 
$\rho_0=c_1^{n_1} b_1^{m_1}c_1^{n_2} b_1^{m_2}\cdots c_1^{n_k} b_1^{m_k}$, $\rho_1 = \alpha^{x_1}{b_2}^{y_1} \alpha^{x_2}{b_2}^{y_2}\cdots \alpha^{x_l}{b_2}^{y_l}$. 
The fact that $\rho_i$ is a non-separating simple closed curve implies that $\rho_i$ is primitive in $\pi_1(T_i,*)$. By \cite{primitive_words, palindromic_words}, $m_im_j>0$ and $n_in_j>0$ for all $i,j$ (resp. $x_ix_j>0$ and $y_iy_j>0$). 
If $\rho_0$ and $\rho_1$ are freely homotopic in $H$, then $\iota_0^*(\rho_0)=\omega \iota_1^*(\rho_1)\omega^{-1}$ for some word $\omega$ in $\{b_1,\lambda\}$. In particular, 
\[ 
\lambda^{pn_1} b_1^{m_1}\lambda^{pn_2} b_1^{m_2}\cdots \lambda^{pn_k} b_1^{m_k}
=
\omega
\lambda^{px_1}(b_1\lambda^r)^{y_1} \lambda^{px_2}(b_1\lambda^r)^{y_2}\cdots \lambda^{px_l}(b_1\lambda^r)^{y_l}
\omega^{-1}. 
\]
Recall that the words from $\rho_0$ and $\rho_1$ are cyclically reduced. Suppose that $y_j>0$. From the equation above, notice that if some $y_j>1$, then there will exist $i$ satisfying $\lambda^r=\lambda^{pn_i}$, contradicting the fact that $\gcd(p,r)=1$. Then, $y_j=1$ for all $j$. In this case, for some $i,j$, $\lambda^{r+px_j}=\lambda^{pn_i}$ which contradicts the fact that $\gcd(p,r)=1$, $p>1$ and $r\neq 0$. In conclusion, $y_j=0$ for all $j$ and so $\rho_1=\lambda^{px}$ is a power 
curve in $H$. 
\end{proof}


\subsection{2-string tangles}

A (2-string) tangle is a pair $(B,T)$ where $B$ is a 3-ball and $T$ is a pair of disjoint arcs properly embedded in $B$. A tangle is untangled or rational if $T$ is isotopic (rel $\partial T$) to embedded arcs in $\partial B$. A tangle is prime if it is not rational and any 2-sphere intersecting $T$ transversely in two points bounts a ball with an unknotted spanning subarc of $T$. If we fix the boundary of every tangle to be a fixed set of points $\{NE,NW,SW,SE\}$, then we can `multiply' two tangles $(B,T_1)$ and $(B,T_2)$ by vertically stacking $T_1$ over $T_2$; we denote the new tangle by $(B,T_1*T_2)$. The numerator closure of a tangle, denoted by $N(T)$, is the link (or knot) obtained by connecting the north endpoints of $T$ (resp. south endpoints) with horizontal arcs. For a more in-depth introduction to the calculus of tangles, please see \cite{rational_tangles_Kauffman}.

\begin{lemma}[Lem 2 from \cite{LickorishTangles}]\label{lem:prime-tangles}
Let $(A, t)$ be a (two-string) tangle that is either prime or rational. Let $B$ be a 3-ball in the interior of $A$ which meets each component of $t$ in a single interval. Let $u$ be two arcs in $B$ such that $(B, u)$ is a prime tangle and $\partial u = B \cap  t $. Let $t'$ denote the two arcs $(t -(t \cap  B)) \cup u$; then $(A, t')$ is a prime tangle.
\end{lemma}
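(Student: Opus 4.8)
The plan is to verify directly the two defining properties of a prime tangle, as given in the definition above, for the tangle $(A,t')$: that it has no local knots (every $2$-sphere meeting $t'$ transversely in two points bounds a ball with an unknotted spanning subarc), and that it is not rational. Throughout, write $S=\partial B$, a $2$-sphere in $\mathrm{int}(A)$ meeting $t$ --- and hence $t'$ --- transversely in four points, and set $A_0=\overline{A-B}$ with outer strings $s=t\cap A_0=t'\cap A_0$. Thus $(A,t)=(A_0,s)\cup_S(B,t\cap B)$ and $(A,t')=(A_0,s)\cup_S(B,u)$ differ only inside $B$. The whole argument is an intersection-minimization against $S$: any $2$-sphere or disk $F$ witnessing a failure of primeness of $(A,t')$ is isotoped so that $F\cap S$ is a collection of circles (and arcs, when $F$ is a disk) of minimal cardinality, after which innermost-circle and outermost-arc surgeries, fed by the primeness of $(B,u)$ and the prime-or-rational hypothesis on $(A,t)$, either reduce $|F\cap S|$ or yield the desired conclusion.

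\emph{No local knots.} Let $\Sigma$ be a $2$-sphere meeting $t'$ transversely in two points, isotoped to minimize $|\Sigma\cap S|$, so that $\Sigma\cap S$ is a disjoint union of circles avoiding the four points $t'\cap S$. First I would show $\Sigma\cap S=\emptyset$. Taking a circle $c$ innermost on $\Sigma$, it bounds a disk $D\subset\Sigma$ lying in $B$ or in $A_0$ whose interior meets $t'$ in at most two points; on $S$ the curve $c$ bounds two disks carrying the four punctures split as $0$ and $4$, $1$ and $3$, or $2$ and $2$. In each admissible case, capping $D$ with a subdisk of $S$ produces a sphere meeting the relevant strings in at most two points, and the no-local-knot property of $(B,u)$ (when $D\subset B$) or of $(A,t)$ (when $D\subset A_0\subset A$), together with the irreducibility these provide, lets me isotope $\Sigma$ to remove $c$, contradicting minimality; the $2$-and-$2$ distributions are excluded because the corresponding compression of a four-punctured sphere would contradict the primeness of $(B,u)$ or of $(A,t)$. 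Once $\Sigma\cap S=\emptyset$, either $\Sigma\subset B$, where primeness of $(B,u)$ finishes, or $\Sigma$ lies in $A$ disjoint from $B$; in the latter case $\Sigma$ bounds a ball $W$ in $A$ with $W\cap t'=W\cap t$ provided $B\not\subset W$, and $(A,t)$ being prime or rational supplies the unknotted arc. That $B\not\subset W$ follows from counting: both components of $t'$ meet $u$ and hence pass through $B$, so if $B\subset W$ each would cross $\partial W=\Sigma$ at least twice, forcing $|\Sigma\cap t'|\geq 4$, a contradiction.

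\emph{Not rational.} Here I would argue that the four-punctured sphere $P'=\partial A-\eta(t')$ admits no compressing or $\partial$-compressing disk separating the two strings; since a rational tangle does admit such a disk, this shows $(A,t')$ is not rational. Suppose $D$ is such an essential disk, and minimize $|D\cap S|$, a union of circles and arcs. Innermost circles are removed exactly as above. An outermost arc $\beta$ of $D\cap S$ cuts from $D$ a subdisk $D_0$ meeting $S$ only along $\beta$ and lying in $B$ or in $A_0$. If $D_0\subset B$, then $D_0$ is a compressing or $\partial$-compressing disk for the inner four-punctured sphere $S-\eta(\partial u)$ in the complement of $u$, contradicting the primeness of $(B,u)$; this is the decisive use of the hypothesis. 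If $D_0\subset A_0$, I would surger along it to reduce $|D\cap S|$, invoking the prime-or-rational hypothesis on $(A,t)$. Iterating, $D\cap S=\emptyset$, so $D$ lies entirely inside $B$ (impossible, as $(B,u)$ is prime) or inside $A_0\subset A$ (impossible when $(A,t)$ is prime; in the rational case such a $D$ disjoint from $B$ is ruled out by combining it with the primeness of $(B,u)$). Hence no essential disk exists, and $(A,t')$ is not rational.

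The main obstacle I anticipate is the bookkeeping in the ``not rational'' step, especially when $(A,t)$ is only rational rather than prime. In that case the outer region $A_0$ carries compressible four-punctured spheres, so one cannot simply quote incompressibility on the outside; instead the primeness of $(B,u)$ must be used decisively to prevent an essential disk of $(A,t')$ from being isotoped entirely into $A_0$. Making the intersection-minimization genuinely terminate --- ensuring that removing one circle or arc does not create others, and that the $2$-and-$2$ puncture distributions are consistently excluded --- is the delicate heart of the argument, and the no-local-knot conclusion established first is precisely what licenses those reductions.
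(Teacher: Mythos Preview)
The paper does not prove this lemma; it is quoted from Lickorish and used as a black box, so there is no in-paper argument to compare against. Your overall strategy---minimizing intersections with $S=\partial B$ and arguing by innermost pieces---is the standard one and is essentially Lickorish's own. The ``no local knots'' half is fine.

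There is, however, a genuine slip in the ``not rational'' half. A disk $D$ witnessing rationality of $(A,t')$ is properly embedded in $A$ with $\partial D\subset\partial A$, while $S$ lies in the interior of $A$; therefore $D\cap S$ consists of circles only. Your discussion of an ``outermost arc $\beta$ of $D\cap S$'' and of ``$\partial$-compressing disks'' is thus vacuous and should be deleted. Fortunately, once this is corrected the argument is simpler, not harder. Take a circle $c$ innermost on $D$, bounding a subdisk $D_0$ disjoint from $t'$. If $D_0\subset B$: either $c$ is inessential on the four-punctured sphere $S-\eta(u)$ and one isotopes to reduce, or $c$ is essential and $D_0$ compresses $S-\eta(u)$ in $B-u$, contradicting primeness of $(B,u)$. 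If $D_0\subset A_0$: let $E\subset S$ be the disk for which $D_0\cup E$ bounds a ball $W_0\subset A_0$; since each of the four arcs of $s=t'\cap A_0$ runs from $S$ to $\partial A\not\subset W_0$ and misses $D_0$, none can have its $S$-endpoint in $E$, so $E$ carries no punctures and one isotopes across $W_0$ to reduce. When finally $D\cap S=\varnothing$, your own counting argument (both strings of $t'$ pass through $B$) rules out $D\subset A_0$, and primeness of $(B,u)$ rules out $D\subset B$. So the ideas you identified are exactly the right ones; only the arc discussion needs to be excised.
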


\begin{proposition}\label{ref_prop2}
Let $(B,T)$ be a rational tangle such that $T=T_1*T_2$,  then some $T_i$ is a rational tangle of the form $1/n$. 
\end{proposition}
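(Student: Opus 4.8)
The plan is to pass to double branched covers and reduce the statement to a fact about gluing two solid tori along an annulus. Write $B = B_1 \cup_D B_2$ for the decomposition of the ambient ball realizing the product $T = T_1 * T_2$, so that $D \cap T$ is two points and $T_i = T \cap B_i$. Since $T$ is rational, $W := \Sigma_2(B,T)$ is a solid torus, and $D$ lifts to a properly embedded annulus $A$ separating $W$ into $W_1 \cup_A W_2$, where $W_i = \Sigma_2(B_i, T_i)$ is the double branched cover of the ball along $T_i$. Each $W_i$ has torus boundary, $\partial A$ lies on $\partial W$, and $A$ is the ``standard'' annulus in $\partial W_i$ coming from the face $D$ (one of the three arising from the three ways of separating the four endpoints of $T_i$).

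First I would reduce to the case that $T_1$ and $T_2$ are both rational. Since $T$ has no closed components, neither $T_i$ does, so each is rational, prime, or locally knotted; a locally knotted $T_i$ would make $T$ locally knotted, contradicting rationality of $T$. If, say, $T_1$ were prime, then $W_1$ would be irreducible, $\partial$-irreducible, and not a solid torus, whence $A$ is incompressible in $W_1$ (a compressing disk for $A$ has its boundary on $\partial W_1$, so it would compress $\partial W_1$); a standard innermost-disk argument then shows $W = W_1 \cup_A W_2$ cannot be a solid torus, a contradiction. Alternatively, and in the spirit of Lemma~\ref{lem:prime-tangles}: after possibly interchanging $T_1$ and $T_2$ one replaces $T_1$ by the trivial tangle so that the result $T'$ is a rational tangle with no closed component; then the ball $B_1$ meets each component of $T'$ in a single interval and $(B_1,T_1)$ is prime, so Lemma~\ref{lem:prime-tangles} forces $T = T_1 * T_2$ to be prime, which is impossible. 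Hence both $W_i$ are solid tori.

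Now I would analyze the position of $A$ in the solid torus $W = W_1 \cup_A W_2$. Let $d_i \ge 0$ be the number of times the core of $A$ winds longitudinally around $W_i$, so that $d_i = 0$ exactly when $A$ is compressible in $W_i$ and $d_i \ge 1$ exactly when it is incompressible there. If $d_1, d_2 \ge 2$, then $A$ is a vertical annulus for Seifert fibrations of $W_1$ and $W_2$, each with one exceptional fiber, and $W$ is Seifert fibered over the disk with two exceptional fibers of orders $d_1, d_2$ — in particular not a solid torus, a contradiction. So, after relabeling, $d_1 \le 1$. The last step is to recover $T_1$ from $d_1$: writing $T_1$ with fraction $p_1/q_1$, one checks (e.g.\ from $\Sigma_2(N(T_1)) = L(p_1,q_1)$, together with the classification of rational tangles by their fractions) that the winding number $d_1$ of the lift of the face $D$ equals $|p_1|$. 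Thus $d_1 = 1$ forces $T_1$ to have numerator $\pm1$, i.e.\ $T_1$ is of the form $1/n$ (allowing $n=0$, the $\infty$-tangle), while $d_1 = 0$ forces $T_1$ to be the trivial $0$-tangle — a case that occurs only when $T$ itself is trivial and is regarded as degenerate. Either way some $T_i$ has the claimed form.

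I expect the main obstacle to be this final identification, together with the bookkeeping of degenerate cases: determining the winding number of the face-lift annulus in $\Sigma_2(B_i, T_i)$ for an arbitrary rational tangle, keeping straight the trivial tangles $0$ and $\infty$, and handling the subcase in which $A$ is compressible (which geometrically says that an arc of one of the $T_i$ is parallel into the disk $D$). By contrast, the reduction to both factors being rational and the dichotomy for incompressible annuli in a solid torus are comparatively routine.
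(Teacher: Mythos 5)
Your argument is genuinely different from the paper's. The paper, after the same reduction to both $T_i$ being rational (via Lickorish's lemma, which you also give as your second alternative), simply takes numerator closures: $N(T)=N(T_1)\#N(T_2)$ is a $2$-bridge link, hence prime, so some $N(T_i)$ is trivial, so some $T_i=\pm 1/n$. You instead pass to double branched covers and argue that a solid torus cannot be the union of two solid tori glued along an annulus winding $d_i\ge 2$ in each side (Seifert fibered over the disk with two exceptional fibers), then translate $d_i$ back into the numerator $|p_i|$. This is the ``upstairs'' version of the same phenomenon (primeness of $2$-bridge links is itself usually proved via the lens space cover), and it works; the paper's route is shorter because it quotes primeness of $N(T)$ as a black box, whereas yours re-derives the needed piece of it but requires the identification $d_i=|p_i|$, which you correctly isolate as the technical crux.

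There is, however, one genuine loose end: your treatment of the case $d_1=0$. You assert that this ``occurs only when $T$ itself is trivial and is regarded as degenerate'' and conclude ``either way some $T_i$ has the claimed form.'' But the $0$-tangle $0/1$ is \emph{not} of the form $1/n$, so in this case the conclusion must be carried by $T_2$, and nothing you have written forces that: the Seifert-fibered dichotomy only tells you that not both $d_i$ are $\ge 2$, which is vacuous once $d_1=0$. The claim is still true, but it needs an argument — for instance, Mayer--Vietoris for $W=W_1\cup_A W_2$ gives $H_1(W)\cong \mathbb{Z}^2/\langle (d_1,d_2)\rangle$, so $W$ a solid torus forces $\gcd(d_1,d_2)=1$; with $d_1=0$ this gives $d_2=1$, i.e.\ $T_2=\pm 1/n$. (Concretely, without this step your proof would not rule out something like $0*3$ being rational with neither factor of the form $1/n$; the homology computation is exactly what excludes it.) With that one line added, your proof is complete.
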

\begin{proof}
Observe that both $T_1$ and $T_2$ are locally unknotted, because $T$ is a rational tangle. So, for $i=1, 2$, $T_i$ is either a rational or prime tangle. If one of them were prime, let's say $T_1$, we can replace $T_1$ with a trivial tangle $S$ and obtain a tangle isotopic to $T_2$.  Using Lemma \ref{lem:prime-tangles} with $A' = B$, and $A =S * T_2 = T_2$ (which is either prime or rational), we conclude that $B$ must be prime, contradicting the fact that $B$ is rational. Hence, both tangles must be rational.

Now, by taking the numerator closure, we get that $N(T) = N(T_1) \# N(T_2)$. But we know that $N(T)$ is a prime knot \cite{tunnel_one_knots}, so either $N(T_1)$ or $N(T_2)$ is a trivial knot. But the only rational tangle $p/q$ with $N([p/q])$ a trivial knot is the tangle $\pm1/n$. 
\end{proof}

\begin{proposition}\label{prop_tangle_argument}
Let $H$ be a genus two handlebody and $J=c_1\cup c_2\cup c_3$ a 1-submanifold separating $\partial H$ into two incompressible pairs of pants $S$ and $S_0$. If $c_1$ and $c_2$ are basic circles in $H$, then any incompressible pair of pants $K\subset H$ spanning $J$ is parallel to either $S$ or $S_0$. 
\end{proposition}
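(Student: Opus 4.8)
The idea is to realise $H$ as the exterior of a rational $2$--string tangle and then apply Proposition~\ref{ref_prop2} to a natural decomposing sphere built from $K$.

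\smallskip
\noindent\emph{Step 1 (a tangle model for $(H,J)$).} Because $c_1,c_2$ are basic circles, $\{[c_1],[c_2]\}$ is a basis of $\pi_1H$ and hence normally generates it. Attaching a $2$--handle to $H$ along a regular neighbourhood of each $c_i$ in $\partial H$ produces a manifold $H'$ with $\pi_1H'=1$ and $\partial H'=S^{2}\sqcup S^{2}$; thus $H'\cong S^{2}\times I$, and capping the two spheres with balls $B_1,B_2$ gives $S^{3}=B_1\cup H'\cup B_2$. Consequently $S^{3}=H\cup_{\partial H}H^{*}$, where $H^{*}=B_1\cup(\text{three }2\text{--handles})\cup B_2$. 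Viewed from $H^{*}$ these three $2$--handles are $1$--handles joining $B_1$ to $B_2$; cancelling the one dual to $c_1$ against a $0$--handle presents $H^{*}=B^{*}\cup t_2\cup t_3$ with $B^{*}$ a $3$--ball and $t_2,t_3$ one--handles whose meridians are $c_2,c_3$. Hence $\partial H$ is a genus two Heegaard surface of $S^{3}$ (standard, by Waldhausen), and $H=E(\mathcal T)$ for the $2$--string tangle $\mathcal T=\bigl(\overline{S^{3}\smallsetminus B^{*}},\,\tau_2\cup\tau_3\bigr)$, with $\tau_i$ the core of $t_i$. Since $E(\mathcal T)=H$ is a handlebody, $\mathcal T$ is rational.

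\smallskip
\noindent\emph{Step 2 ($K$ gives a decomposing sphere).} Cap the boundary circle $c_i$ of $K$ with the cocore disk $D_i$ of the $i$--th $2$--handle. As $K$ is a pair of pants, $\widehat K:=K\cup D_1\cup D_2\cup D_3$ is an embedded $2$--sphere in $S^{3}$ meeting each string $\tau_i$ exactly once (in $D_i\cap\tau_i$), so it is a decomposing sphere for $\mathcal T$. The analogously capped spheres $\widehat S$ and $\widehat{S_0}$ of $S$ and $S_0$ lie on opposite sides of $\widehat K$ (inside each $2$--handle the three cocores occur at consecutive levels). Therefore $\widehat K$ realises $\mathcal T$ as a vertical sum $\mathcal T=\mathcal T_1\ast\mathcal T_2$, with $\mathcal T_1$ the sub--tangle between $\widehat S$ and $\widehat K$ and $\mathcal T_2$ the one between $\widehat K$ and $\widehat{S_0}$; accordingly $K$ separates $H$ into $W=E(\mathcal T_1)$, containing $S$, and $W_0=E(\mathcal T_2)$, containing $S_0$.

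\smallskip
\noindent\emph{Step 3 (conclusion).} Proposition~\ref{ref_prop2} now gives that one of $\mathcal T_1,\mathcal T_2$ is an integral tangle $\pm1/n$. If it is $\mathcal T_1$, one deduces that $W$ is a product region between $S$ and $K$ matching the boundary pattern $J$, i.e.\ that $K$ is parallel to $S$; if it is $\mathcal T_2$, symmetrically $K$ is parallel to $S_0$. For $\mathcal T_1=[0]$ this is immediate, since $E([0])\cong P\times I$ with $S$ and $K$ the two pairs--of--pants ends; a general $\pm1/n$ differs from $[0]$ by $n$ twists whose axis meets $\partial W$ in an annular neighbourhood of $c_1$, and such a twist is effected by an ambient isotopy of $W$ supported near $c_1$, so it does not change the isotopy class of $K$ in $H$. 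The main difficulty is precisely this last step --- carefully matching the combinatorial form ``$\pm1/n$'' of $\mathcal T_1$ with the product structure on $W$ relative to $J$ --- and it is here that incompressibility of $K$, $S$ and $S_0$ is used, to exclude degenerate sub--tangles and to pin $K$ down up to isotopy. The auxiliary facts (that $K$ separates $H$, and that $\widehat K$ is an embedded decomposing sphere) are routine.
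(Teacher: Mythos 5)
Your proposal is correct and follows essentially the same route as the paper: attach $2$-handles to realise $H$ as the exterior of a rational $2$-string tangle, read off from $K$ a factorisation $\mathcal{T}=\mathcal{T}_1\ast\mathcal{T}_2$ of that tangle, and invoke Proposition \ref{ref_prop2} to force one factor to be $\pm 1/n$ and hence a product region. The only real difference is cosmetic --- you attach handles along all three curves, pass through $S^3$, and cancel the handle dual to $c_1$ (so your strings have meridians $c_2,c_3$ and $c_1$ is the decomposing slope), whereas the paper attaches handles only along the basic circles $c_1,c_2$ to get the ball directly and uses $c_3$ as the slope --- and both texts leave the final ``$\pm 1/n$ implies parallelism'' step at the same level of sketch.
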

\begin{proof}
We think of $H$ as the complement of a trivial 2-string tangle $(B^3,T)$. One can see this by adding 2-handles to $H$ along $c_1$ and $c_2$: the resulting 3-manifold is a ball $B^3$ and the co-cores of the 2-handles are the strings of $T$. We can depict $(B^3,T)$ as in Figure \ref{fig:argumento_ovillo}, the meridional curve (slope $0$) corresponds to the curve  $c_3$ of $J$, and the meridians of each string are isotopic to $c_1$ and $c_2$. The top 3-holed disk in $\partial \left(B^3-\eta(T)\right)$ corresponds to $S_0$ and the bottom one to $S$. Furthermore, any other pair of pants $(K, \partial K) \subset (H, J)$ will correspond to a disk in $B^3$ with $\partial$-slope $0$ that intersects each string once. In particular, each $K$ will yield a factorization of $T$ as the product of two 2-string tangles $T=T_1* T_2$. By Proposition \ref{ref_prop2}, both are rational, and one, say $T_2$, is of the form $1/n$. This, in particular, gives us a parallelism between $K$ and $S$. Therefore, the pair $(H,J)$ is minimal.  
%
\begin{figure}
    \centering
    \includegraphics[width=9cm]{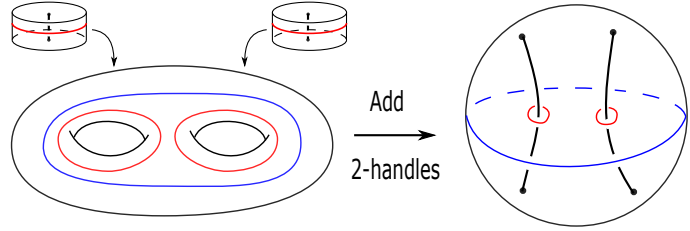}
    \caption{Adding two 2-handles along a pair of base curves creates a 2-string tangle }
    \label{fig:argumento_ovillo}
\end{figure}
\end{proof}

\begin{proposition}\label{prop_new_lem_3.5}
Let $H$ be a genus two handlebody and $c_1,c_2\subset \partial H$ a pair of disjoint essential simple closed curves. Let $M=H\underset{{c_1,c_2}}{\cup} T$ be the manifold obtained by gluing a solid torus $T$ along annular neighborhoods $A_1\cup A_2=\partial H \cap \partial T$ of $c_1\cup c_2$, such that each $A_i$ runs at least twice around $T$. 
If $M$ is a genus two handlebody, then $\{c_1,c_2\}$ are basis circles in $H$. 
\end{proposition}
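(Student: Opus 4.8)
The plan is to produce a non‑separating meridian disk of $M$, push it off the glued‑in solid torus $T$, and then read off the conclusion from the way it cuts $H$. I begin with two preliminary observations. First, $A_1$ and $A_2$ are disjoint annular neighbourhoods, on the torus $\partial T$, of curves that wind at least twice around $T$; such curves are essential on $\partial T$, and disjoint essential curves on a torus are isotopic, so the cores of $A_1$ and $A_2$ are isotopic on $\partial T$ and hence wind the \emph{same} number $p\ge 2$ of times around $T$. Second, since $A_i$ is a neighbourhood of the essential curve $c_i$, its core is non‑trivial in $\pi_1(M)$; also $A_i$ does not separate $M$ (its complement stays connected through $A_{3-i}$), so, as $M$ is irreducible, $A_1$ and $A_2$ are incompressible and $\partial$‑incompressible annuli in $M$.

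Now choose a non‑separating meridian disk $D$ of the genus two handlebody $M$ and isotope it so as to minimise $|D\cap(A_1\cup A_2)|$. The key step is to show this minimum is $0$. An innermost circle of $D\cap(A_1\cup A_2)$ would either compress some $A_i$, which is impossible, or be inessential on $A_i$, and then irreducibility of $M$ lets one remove it; an outermost arc of $D\cap(A_1\cup A_2)$ would, if essential on $A_i$, give a $\partial$‑compression of $A_i$, which is impossible, and can be removed otherwise. Hence $D$ is disjoint from $T$. Being connected with boundary an essential curve of $\partial M$, $D$ then lies in $H$ (if it lay on the $T$‑side, $\partial D$ would bound a disk in $T$ and lie in $\partial T\cap\partial M$, forcing $\partial D$ to be inessential on $\partial M$), and $\partial D$ is disjoint from $c_1$ and $c_2$. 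Finally, checking the few cases in which $\partial D$ could be inessential on $\partial H$ — each of which would make some $c_i$ null‑homotopic in $H$ or would make $D$ separate $M$ — one sees that $D$ is genuinely a meridian disk of the handlebody $H$.

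It remains to analyse how $D$ cuts $H$, using that cutting $M$ along $D$ yields a genus one handlebody, i.e.\ a solid torus $S$. If $D$ is non‑separating in $H$, then $H$ cut along $D$ is a solid torus $H'$ containing both $c_1$ and $c_2$; writing $[c_i]=h'^{\,f_i}$ in $\pi_1(H')=\langle h'\rangle$ with $f_i\ne 0$, and $[\mathrm{core}\,A_i]=t^{\pm p}$ in $\pi_1(T)=\langle t\rangle$, van Kampen gives $\pi_1(S)=\langle h',t,s\mid h'^{\,f_1}=t^{\pm p},\ s\,h'^{\,f_2}s^{-1}=t^{\pm p}\rangle$. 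The base $\langle h',t\mid h'^{\,f_1}=t^{\pm p}\rangle$ of this HNN extension injects into $\pi_1(S)=\mathbb{Z}$, which forces $|f_1|=1$ and $h'=t^{\pm p}$; but then $\pi_1(S)$ is a Baumslag–Solitar group $\langle t,s\mid s\,t^As^{-1}=t^B\rangle$ with $p\mid A$ and $p\mid B$, whose abelianisation $\mathbb{Z}\oplus\mathbb{Z}/(A-B)$ is never $\mathbb{Z}$ since $p\ge 2$ divides $A-B$ — a contradiction. So $D$ separates $H$, and since $D$ does not separate $M$ it splits $H$ into solid tori $H_1\supset c_1$ and $H_2\supset c_2$, with $S=H_1\cup_{A_1}T\cup_{A_2}H_2$ and $\pi_1(S)=\langle h_1,t,h_2\mid h_1^{\,e_1}=t^{\pm p}=h_2^{\,e_2}\rangle$, where $[c_i]=h_i^{\,e_i}$ in $\pi_1(H_i)=\langle h_i\rangle$. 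If $|e_1|\ge 2$ this group contains the non‑abelian torus knot (or link) group $\langle h_1,t\mid h_1^{\,e_1}=t^{\pm p}\rangle$, contradicting $\pi_1(S)=\mathbb{Z}$; hence $|e_1|=1$, and symmetrically $|e_2|=1$. Thus $[c_i]=h_i^{\pm1}$, and since $\pi_1(H)=\pi_1(H_1)\ast\pi_1(H_2)=\langle h_1\rangle\ast\langle h_2\rangle$, the pair $\{[c_1],[c_2]\}$ is a basis of $\pi_1(H)$; that is, $c_1$ and $c_2$ are basic circles in $H$.

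The main obstacle is the second paragraph: making rigorous the innermost‑disk/outermost‑arc argument that pushes $D$ off $T$, together with the verification that $A_1$ and $A_2$ are incompressible and $\partial$‑incompressible in $M$. Once this is done, the remainder is a short Bass–Serre computation. (Alternatively one could fix $D$ from the start as the disk provided by Proposition \ref{pro:gamma-has-companion-annulus-condition} for the power circle $\mathrm{core}(B_1)$ in $M$, where $B_1$ is one of the two annuli of $\partial T\setminus(A_1\cup A_2)$; but this refinement does not appear to be needed.)
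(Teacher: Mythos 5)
Your argument breaks at its central step: the claim that $A_1$ and $A_2$ are $\partial$-incompressible in $M$ is false, and the justification you give (non-separating plus irreducibility of $M$) does not bear on $\partial$-incompressibility at all. A properly embedded incompressible and $\partial$-incompressible surface in a handlebody is necessarily a disk, so an incompressible annulus in the genus two handlebody $M$ is always $\partial$-compressible; concretely, since $A_i$ is non-separating in $M$ it is not $\partial$-parallel, and $\partial$-compressing it produces an essential disk of $M$ — so $\partial$-compressing disks for $A_i$ genuinely exist. Consequently your outermost-arc step ("an arc essential on $A_i$ would give a $\partial$-compression of $A_i$, which is impossible") does not rule out essential arcs of $D\cap A_i$, and an arbitrary non-separating meridian disk of $M$ cannot be pushed off $T$ by this argument. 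This is not a routine verification to be deferred: it is the entire content of the proposition, and your closing parenthetical dismisses precisely the device that makes it work. The paper takes $D$ to be the disk furnished by Proposition \ref{pro:gamma-has-companion-annulus-condition} applied to the core $b_1$ of a complementary annulus $B_1\subset\partial T\cap\partial M$ (a power circle in $M$ with an evident companion annulus), so that $D\cap b_1=\emptyset$ from the outset; it then intersects $D$ with a push-off $C$ of $A_1$ having $\partial C=b_1\cup b_2\subset\partial M$. Disjointness from $b_1$ forces every arc of $D\cap C$ to have both endpoints on $b_2$, hence to be inessential in $C$, and only then can all intersections be removed and $D$ be isotoped into $H$ off $A_1\cup A_2$.

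Once a meridian disk of $M$ disjoint from $A_1\cup A_2$ and lying in $H$ is in hand, your endgame is sound and is a genuinely different finish from the paper's: you rule out the non-separating case and extract $|e_1|=|e_2|=1$ by a Bass--Serre/abelianization computation in $\pi_1$ of the cut-open solid torus, whereas the paper argues geometrically that each $A_i$ is $\partial$-parallel in the solid torus $M'=M-\eta(D)$ and identifies the product side with $T'_i$. (The paper reaches the same fork — one versus two solid tori after cutting $H$ — and discards the non-separating case by a boundary count.) So the proposal is repairable, but as written the key step fails and the repair is exactly the choice of $D$ you set aside as unnecessary.
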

\begin{proof}
Let $B_1$ and $B_2$ be the complementary annuli of $A_1 \cup A_2$ in $\partial T$. Let $b_1$ and $b_2$ be the cores of $B_1$ and $B_2$, respectively. Observe that $b_1$ and $b_2$ lie on $\partial M$ and that they have companion annuli $A_1 \cup A_2 \cup B_2$ and $A_1 \cup A_2 \cup B_1$, respectively. 
By Proposition \ref{pro:gamma-has-companion-annulus-condition}, $b_1$ and $b_2$ are power circles in $\pi_1(M)$. In particular, there is a non-separating compressing disk $D$ for $\partial M$ such that $D \cap b_1 = \emptyset $. Take $C$ a parallel copy of $A_1$ in $M$, with $\partial C = b_1 \cup b_2$.  

Because any circle in $D$ is trivial in $\pi_1(M)$ and as the core of $C$ (parallel to $b_1$ and $b_2$) is a power circle in $M$, then $D\cap C$ can not have circles parallel to the core. That means that the circles (closed curves) in $D\cap C$ are trivial in $C$. By removing innermost circles at the time in $C$ of $C \cap D$, we can reduce the intersections between $D$ and $C$ to only arcs. 

Because $D$ is disjoint from one of the two boundaries of the annulus $C$, the intersection arcs of $C\cap D$ have both ends in the same components of $\partial C$. So, the intersection arcs of $D \cap C$ only have ended in one component of $\partial C$ (on $b_2$), so they are parallel to $b_2$. Then, removing one innermost arc at a time can eliminate all the intersections between $C$ and $D$. In conclusion, we can make $D$ and $C$ disjoint. In particular, we have made $D$ disjoint from $b_1$ and $b_2$.

Notice that we made $D$ disjoint from $C$. As $C$ is parallel to $A_1$, then we can make $D$ also disjoint from $A_1$. Similarly, we can make $A_2$ and $D$ disjoint. 

As all the modifications we perform on $D$ are isotopies, $D$ is still non-separating. Then $M' = \overline{M - \eta(D)}$ is a solid torus with $A_1$ and $A_2$ embedded in it. Recall that $A_1$ and $A_2$ separate a torus $T$ from $M$, hence the disk $D$ must lie either in $T$ or in the complement $H$. It is not $T$ because $A_1$ and $A_2$ go around the longitude at least twice, so any properly embedded in $T$ disjoint from $A_i$ ($i=1,2$) is parallel to $B_1 \cup B_2 = T - A_1 \cup A_2$,  hence parallel to $\partial M$. Therefore, $D$ lies on $H$ and not on $T$. In conclusion, $D$ is a compressing disk for $H$ and disjoint from $A_1$ and $A_2$. 

Then $H' =\overline{H - \eta(D)}$ is one or two solid tori depending on whether $D$ is separating. We can discard the former 
because the result of gluing two solid tori along two disjoint annuli on their boundary yields an oriented manifold with disconnected boundary or a non-orientable one with connected boundary. 

Let $T'_1$ and $T'_2$ be the two solid tori components to which $H'$ decompose, being $T'_i$ the one containing the annulus $A_i$ for $i=1,2$. This implies that if we cut $M'$ along $A_1 \cup A_2$, we obtain three solid tori; $T$, $T'_1$, and $T'_2$ with $A_i \subset \partial T'_i$ for $i=1, 2$.
We will conclude the proof by showing that $A_i$ loops around the longitude of $T_i$ just once for $i=1, 2$.

Recall that $M'$ is also solid tori, with two incompressible annuli $A_1$ and $A_2$ properly embedded. So, $A_1$ and $A_2$ are $\partial$-parallel to $\partial M'$. Hence, $A_i$ splits $M'$ into two solid tori; one of which $A_i$ goes along the longitude only once; we will show that that torus is $T'_i$. 
Cut $M'$ along $A_1$, we obtain two pieces; one of them is a product region i.e. $A_1 \times I$. Now, $A_2$ can not be inside that region, or it will imply that $A_2$ is parallel to $A_1$, which translates that the bounded region $T$ between $A_1$ and $A_2$ inside $M'$ is product; contradicting the fact that $A_i$ goes along the longitude at least twice. So, the parallel region for $A_1$ is $T'_1$. Similarly, we have that the parallel region cut out by $A_2$ is $T'_2$. This concludes the proof.
\end{proof}
%
%
\newpage
\section{Proof of Theorem \ref{thm:cota-2toro-en-dos-asas}}\label{section:cota-dos-asas}

We restate the theorem for the convenience of the reader. 
\newtheorem*{thm:cota-asas}{Theorem \ref{thm:cota-2toro-en-dos-asas}}
\begin{thm:cota-asas}
Let V be a handlebody of genus two, and let $J = J_1 \cup J_2$ be an essential pair of simple closed curves that separates an annulus from $\partial V$. Then, $J$ bounds at most three mutually disjoint, non-parallel, incompressible, separating, twice-punctured tori in $V$.
\end{thm:cota-asas}

Notice that any of those surfaces will cut out $V$ into a genus three handlebody $H_1$ and a genus two handlebody $H_0$. By taking the outermost surface, it will be enough to prove Theorem \ref{thm:cota-2toro-en-dos-asas} for $H_0$ instead of $V$. So, for now on and without lost of generality we will assume that $\partial V - J$ consists of an annulus $A$ and an \emph{incompressible} twice-punctured torus $F_0$.

Let $F$ be an incompressible twice-punctured torus spanning $J$ and not parallel to $F_0$. Observe that $F$ has to be boundary compressible. By the following lemma, $F$ must be boundary compressible towards $F_0$. 

\begin{lemma}\label{lem:not_towards_A}
$F$ is not $\partial$-compressible towards $A$. 
\end{lemma}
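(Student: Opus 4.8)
The plan is to argue by contradiction: suppose $F$ is $\partial$-compressible towards $A$, perform the $\partial$-compression, and analyze the resulting surface to derive a contradiction with the standing hypotheses on the pair $(V,J)$. Since $\partial V - J$ consists of the annulus $A$ and the incompressible twice-punctured torus $F_0$, a $\partial$-compression of $F$ towards $A$ uses a disk $E$ with $\partial E = a \cup b$, where $a$ is an essential arc in $F$ and $b$ is an arc in $A$. I would first observe that $b$ must be an essential (spanning) arc of the annulus $A$: an inessential arc would give a $\partial$-compression that either does nothing or contradicts incompressibility of $F$. So the $\partial$-compression pushes $F$ across $A$, and the boundary of $F$ must interact with both components $J_1, J_2$ of $J$ through this disk.

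The key step is to understand what surface $F'$ results from compressing $F$ along $E$ and what its boundary is. Because $b$ is a spanning arc of $A$ joining the two circles $J_1$ and $J_2$, the arc $a$ in $F$ must be an arc joining the two boundary circles of $F$ (recall $F$ is a twice-punctured torus, so $\partial F = J_1 \cup J_2$). Compressing along such an arc changes the two boundary circles into a single circle, and reduces genus or changes the topology: I expect $F'$ to be either a once-punctured torus or an annulus or pair of pants depending on how $a$ sits in the torus-with-two-holes. The resulting single boundary curve $J'$ lies on $\partial V$ and, after pushing off $A$, is isotopic in $\partial V$ to a curve that is either trivial or a power/primitive curve — and here I would invoke Propositions \ref{pro:gamma-is-not-primitive-or-power} and \ref{pro:gamma-has-companion-annulus-condition} together with the hypothesis that $J$ separates an annulus from $\partial V$ (so $J_1$ and $J_2$ are isotopic in $\partial V$, each going around the same "side"). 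The contradiction should come from the fact that the compressed surface $F'$, being incompressible (a $\partial$-compression of an incompressible surface that is not trivial remains incompressible) and having boundary a curve of a forbidden type, cannot exist in the genus two handlebody $V$, or else forces $F$ to have been parallel to $F_0$ after all.

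Alternatively — and this may be the cleaner route — I would use the fact that $A$ together with part of $F$ and the disk $E$ can be assembled into a compressing disk or an essential annulus for $V$ that is incompatible with $F_0$ being incompressible and $J$ being essential. Concretely: the frontier of a regular neighborhood of $A \cup E \cup F$ (or a suitable subsurface) should produce either a compression of $F_0$ in $V$ (contradicting that $F_0$ is incompressible) or a parallelism between $F$ and $F_0$ (contradicting the choice of $F$). One tracks the Euler characteristic: $\chi(F) = \chi(F_0) = -2$, and a $\partial$-compression raises $\chi$ by $1$, so $\chi(F') = -1$; combined with the constraint that $\partial F' \subset A$ and $A$ is a small annular neighborhood, one pins down $F'$ and shows it is boundary-parallel into $A$, which feeds back to show $F$ was isotopic into a collar of $F_0 \cup A$.

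The main obstacle I anticipate is the bookkeeping in the second step: correctly identifying the topological type of $F'$ and, especially, identifying the isotopy class of its boundary curve $J'$ on $\partial V$, since the $\partial$-compression arc $a$ can run around the torus in complicated ways and the curve $J'$ could a priori be either primitive, a power circle, or inessential. Handling all these cases — and ruling each out using the earlier propositions plus the special structure imposed by "$J$ separates an annulus from $\partial V$" — is where the real work lies. A secondary subtlety is ensuring the $\partial$-compression is performed against $A$ and not accidentally against $F_0$; this requires a careful innermost/outermost argument to make the $\partial$-compressing disk $E$ meet $\partial V$ only in $A$, which I would set up at the very start.
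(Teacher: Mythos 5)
There is a genuine gap: your proposal is a plan that circles the right construction but never lands the actual contradiction, and the specific contradictions you aim for are not the ones that materialize. The paper's proof is three lines: let $D$ be the $\partial$-compressing disk with $\partial D = \beta \cup b$, $\beta \subset F$, $b \subset A$; since $b$ is a spanning arc of the annulus $A$, surgering $A$ along $D$ (equivalently, taking the frontier of a neighborhood of $A \cup D$) produces a properly embedded \emph{disk} $A'$ in $V$, because $A - \eta(b)$ is already a disk. Its boundary is (isotopic to) the curve $\partial\eta(J_1\cup\beta\cup J_2) \setminus (J_1\cup J_2)$ in $F$, which separates $F$ into a pair of pants and a once-punctured torus and is therefore essential in $F$. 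So $A'$ is a compressing disk for $F$, contradicting incompressibility of $F$ itself. Your second, ``alternative'' route gestures at exactly this frontier construction but then asserts the output should compress $F_0$ or exhibit a parallelism between $F$ and $F_0$ --- neither is what happens; the disk compresses $F$. Your first route ($\partial$-compressing $F$ to get $F'$ and classifying $\partial F'$ as primitive, power, or inessential on $\partial V$) misses the decisive simplification that $\partial F' = J|_b$ bounds the disk $A - \eta(b)$ in $\partial V$ outright, so there is no case analysis to do; as written, that route is a list of possibilities with ``I expect'' and ``should'' where the argument ought to be.

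Two smaller points. The blanket claim that ``a $\partial$-compression of an incompressible surface that is not trivial remains incompressible'' is false in general; the paper, when it needs this later (Lemma \ref{lem:incompressible_F'}), argues instead that $F|_D$ retracts to the $\pi_1$-injective subsurface $F - \eta(\beta)$. On the other hand, you correctly flag that one must first rule out $b$ being inessential in $A$, a point the paper's proof passes over silently; that observation is worth keeping, but it does not substitute for the missing core of the argument.
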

\begin{proof}
Let $D$ be a boundary compressing disk for $F$ in $V$ and suppose that $D$ intersects $A$. Boundary compressing $A$ along $D$ yields a properly embedded disk $A'$. Observe that the $\partial A'$ is a curve in $F$ cutting it into a pair of pants and a once-punctured torus. Hence, $\partial A'$ is non-trivial and $A$ is a compressing disk for $F$. This contradicts the incompressibility of $F$. 
\end{proof}

Let $D$ be a boundary compression for $F$ inside $V$. Denote by $\beta = D\cap F$ and $\alpha = D\cap F_0$ the arcs in $\partial D$. Since both $F$ and $F_0$ are incompressible, such arcs are essential in $F$ and $F_0$. 
Remove from $F$ the interior of a small neighborhood of $\beta$. We obtain a surface $F' = F - \eta(\beta)$ with two marked parallel copies of $\beta$ on its boundary. Now glue two copies of $D$ to $F'$ along the $\beta$ arcs . We denote by $F|_D$ the resulting surface (see Fig. \ref{fig:surgery-along-a-disk}).
The boundary compression along $D$ induces a surgery 
on $J$ along the arc $\alpha$ which can be described as follows. Take a closed regular neighborhood $\eta(\alpha) = \alpha \times [-1,1] \subset F_0$, we set $J|_\alpha = (J - \eta(\partial \alpha)) \cup \alpha \times \{-1\} \cup \alpha \times\{1\}$ as the boundary compression of $J$ along $\alpha$ (see Fig. \ref{fig:surgery-along-an-arc}). Notice that $\partial (F|_D) = J|_\alpha$.

\begin{figure}[h!]
\centering
\labellist \small\hair 2pt 
\pinlabel {$D$}  at 135 70
\pinlabel {$F$}  at 180 30  
\pinlabel {$F|_D$}  at 485 90 
\endlabellist  
\includegraphics[width=11cm]{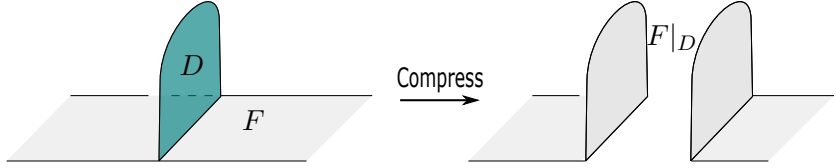}
\caption{At the right $F|_D$; $F$ after boundary compress along  $D$}
\label{fig:surgery-along-a-disk}
\end{figure}

\begin{figure}
    \centering
    \includegraphics[width=6cm]{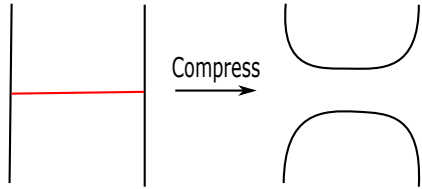}
    \caption{At the right $J|_\alpha$: $J$ after boundary compress along $\alpha$}
    \label{fig:surgery-along-an-arc}
\end{figure}

After boundary compressing $F$ along $D$ we obtain a surface $F'=F|_D$ with boundary $J'=J|_\alpha$. The curves in $J'$ separate the boundary of $V$ in two surfaces, $F'_0$ and $P$. The surface $F'_0$ can be obtained from $F_0$ by removing an open regular neighborhood of $\alpha$ and $P$ is obtained from $A$ by adding a 2-dimensional 1-handle along $\eta(\alpha)$. 

\begin{lemma}\label{lem:incompressible_F'}
The surfaces $F'$, $F'_0$ and $P$ are incompressible in $V$.
\end{lemma}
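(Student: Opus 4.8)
The goal is to show that all three pieces $F'$, $F'_0$, $P$ obtained after boundary-compressing $F$ along $D$ remain incompressible in $V$. I would treat the three surfaces separately, but the common engine is the following standard observation: if $G$ is a surface obtained from an incompressible surface $G_0$ by compressing (or boundary-compressing) along an essential arc/disk, then any compressing disk for $G$ can be pushed off the surgery locus and recombined to produce a compressing disk for $G_0$, contradicting incompressibility. So the real content is bookkeeping about how the surgery interacts with a hypothetical compressing disk.

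\smallskip

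\textbf{Incompressibility of $P$.} Recall $P$ is obtained from the annulus $A$ by attaching a $2$-dimensional $1$-handle along $\eta(\alpha)\subset F_0$. A compressing disk $E$ for $P$ has $\partial E$ a curve on $P$ bounding no disk in $P$. The annulus $A$ is incompressible (it is $\partial$-parallel, being the non-trivial annulus separated off by $J$ from $\partial V$), so $\partial E$ must run over the $1$-handle. But then I would use that $\partial E$, pushed slightly into $F_0$ across $\alpha$, becomes a curve on $F_0$; if it is essential on $F_0$ we contradict incompressibility of $F_0$, and if it is inessential on $F_0$ then (tracking it back through the handle attachment, using that $\alpha$ is essential in $F_0$) $\partial E$ must have been inessential on $P$ as well. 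The subtlety here is to rule out $\partial E$ being a curve that bounds in $F_0$ but not in $P$: this is exactly where essentiality of $\alpha$ in $F_0$ is used — an inessential curve in $F_0$ disjoint from $\alpha$ must bound a disk on the correct side that survives into $P$.

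\smallskip

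\textbf{Incompressibility of $F'_0$.} Since $F'_0 = F_0 - \eta(\alpha)$ with $\alpha$ essential in $F_0$, and $F_0$ is incompressible, this is the most routine case: $F'_0\subset F_0$ is an incompressible subsurface of an incompressible surface (no curve on $F'_0$ bounds a disk in $V$ that $F_0$ wouldn't already have bounded — any such disk can be isotoped off $\alpha$ since $\partial$ of the disk lies in $F_0$, and then it compresses $F_0$). The one point to check is that a compressing curve for $F'_0$ cannot be the curve isotopic to a boundary component of $\eta(\alpha)$; this follows because that curve is either trivial in $F_0$ (hence in $V$, and then trivial in $F'_0$ too — not a compression) or it is the boundary of the band, and essentiality of $\alpha$ forces it to remain essential.

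\smallskip

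\textbf{Incompressibility of $F'$.} This is the main obstacle. The surface $F'=F|_D$ is built from $F$ by removing $\eta(\beta)$ and gluing in two parallel copies of the compressing disk $D$. Suppose $E$ is a compressing disk for $F'$. I would isotope $E$ to meet the two copies of $D$ minimally; innermost-disk arguments on $E\cap D$ (using that $D$ is a disk) let me remove all such intersections, so $E$ can be taken disjoint from both copies of $D$, hence $E$ lives in a neighborhood of $F'-(\text{copies of }D)$, which is $F-\eta(\beta)$. Then $\partial E$ is a curve on $F$ disjoint from $\beta$. If $\partial E$ is essential on $F$, it is a compressing disk for $F$ — contradiction with incompressibility of $F$. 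If $\partial E$ is inessential on $F$, I must argue that reattaching the band along $\beta$ keeps it inessential on $F'$; again this uses that $\beta$ is essential in $F$, so the disk $\partial E$ bounds in $F$ either misses $\beta$ (and survives to $F'$) or, if it contains a $\beta$-endpoint component, the essentiality of $\beta$ prevents $\partial E$ from separating the two feet of the band in a way that would make it essential in $F'$. Equivalently: a surface-compression along an essential arc cannot create a compressible surface out of an incompressible one, because the inverse operation (compressing $F'$ back along the co-core of the band, which is the arc dual to $\beta$) recovers $F$, and compressions do not turn compressible surfaces into incompressible ones. I expect the delicate part of writing this up carefully is handling the case where $\partial E$ bounds a disk in $F$ that is "split" by $\beta$ — ensuring essentiality of $\beta$ genuinely rules this out rather than merely making it look plausible.
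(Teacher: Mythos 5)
Your arguments for $F'_0$ and $F'$ are essentially the paper's: both surfaces retract to subsurfaces of the incompressible surfaces $F_0$ and $F$, and a hypothetical compressing disk can be pushed off the surgery locus (the glued copies of $D$, resp.\ the band $\eta(\alpha)$ or $\eta(\beta)$, which meet the boundary and hence cannot be swallowed by a disk in the interior of $F_0$ or $F$). That part is fine, if anything more detailed than the paper.

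The treatment of $P$, however, has a genuine gap. Your key move is to take a compressing disk $E$ for $P$ whose boundary runs over the $1$-handle and to ``push $\partial E$ slightly into $F_0$ across $\alpha$'' so as to contradict the incompressibility of $F_0$. This only works for curves that are isotopic in $P$ to components of $\partial P$ (those can indeed be pushed across $\partial P=\partial F'_0$ into $F_0$), i.e.\ it covers the case where $P$ is a pair of pants. But $P$ can also be a once-punctured torus (exactly when $\alpha$ joins the two boundary circles of $A$, the type I case), and then $P$ carries essential \emph{non-separating} simple closed curves that run once over the band and once through $A$; such a curve meets the core of $A$ transversely in one point and cannot be isotoped into $F_0$ at all, so your contradiction with the incompressibility of $F_0$ is unavailable. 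The paper avoids this by exploiting the equality $\partial P=\partial F'_0$ together with the incompressibility of $F'_0$ established first: if $P$ is a pair of pants, an essential curve in $P$ is parallel to a component of $\partial P=\partial F'_0$, so a compressing disk for $P$ would compress $F'_0$; if $P$ is a once-punctured torus, surgering $P$ along a compressing disk with non-separating boundary produces a properly embedded disk in $V$ with boundary $\partial P=\partial F'_0$, again compressing $F'_0$ (and the separating case is similar). You should reroute your argument for $P$ through $F'_0$ in this way, or otherwise supply an argument for the non-separating curves in the once-punctured-torus case; as written that case is not addressed.
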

\begin{proof}
Since $F'_0$ is contained in $F_0$ and the later is incompressible, then $F'_0$ is incompressible.  Similarly, $F'$ is incompressible because $F'$ retracts to $F-\eta(\beta)$ which is incompressible. 

By construction, $P$ is either a once-punctured torus or a pair of pants depending on whether $\alpha$ connects the two boundaries of $A$ or not. 
If $P$ is a pair of pants, then any compressing disk $L$ for $P$ in $V$ will have $\partial L$ parallel to a component of $\partial P$, making $F'_0$ compressible. Hence, if $P$ is a pair of pants, then $P$ is incompressible. 
Now consider the case when $P$ is a once-punctured torus ans suppose that it has a compressing disk $D$. If $\partial D$ is a non-separating curve in $P$, then compressing $P$ along $D$ yields a disk with boundary $\partial P=J|_\alpha =\partial F'_0$; contradicting the fact that $F'_0$ is incompressible. A similar argument shows that $\partial D$ cannot be separating in $P$. Hence, such $D$ cannot exist, so $P$ is incompressible.
\end{proof}

Because of the way we constructed surfaces $F'$, $F'_0$, and $P$, they all have  Euler characteristics equal to $-1$. Let $K$ be another twice-punctured surface spanning $J$. Since, the surfaces $F_0,F$, and $K$ are `nested', the $\partial$-compressing disk $D$ must intersect $K$. In fact, $K$ intersects $D$ exactly in the middle (in an arc between $\alpha$ and $\beta$). So, we can boundary compress $K$ along the half disk of $D$ containing $\alpha$ and create an incompressible surface $K'$ as before: with $\chi(K')=-1$ and $\partial K' =  \partial P$.  
%
%
%
%

Modulo homeomorphism, there are three distinct essential, properly embedded arcs in a twice-punctured torus $T$. 
We will say that an arc is type I, II, or III if it cuts $T$ into a once-punctured torus, a pair of pants, or a disconnected surface, respectively. Figure \ref{fig:arcs-types} contains standard models for each type of arc. 
In general, the arcs $\alpha$ and $\beta$ can be of any type. The following lemmas reduce these possibilities by half (see Table \ref{tab:cases-alfa-beta-types}).

\begin{figure}
\centering
\labellist \small\hair 2pt 
\pinlabel {I}  at 280 230
\pinlabel {II}  at 40 230  
\pinlabel {III}  at 270 80 
\endlabellist     
\includegraphics[width=8cm]{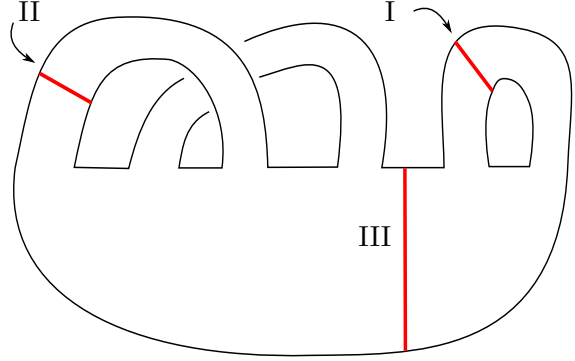}
\caption{Possibilities for an arc in a twice-punctured torus}
\label{fig:arcs-types}
\end{figure}

\begin{lemma}
The arc $\alpha$ is type I if and only if $\beta$ is type I.
\label{lem:alphaI-iff-beta-I}
\end{lemma}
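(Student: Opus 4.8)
The plan is to show each implication by a parity/Euler-characteristic bookkeeping argument on the surgered surface $F|_D$ and its boundary $J|_\alpha$. Recall that $\partial(F|_D)=J|_\alpha$, that $\chi(F|_D)=\chi(F)=-1$, and that $F|_D=F'$ was shown to be incompressible in Lemma \ref{lem:incompressible_F'}. The key observation is that $F'=F|_D$ is a connected surface with $\chi(F')=-1$ and (depending on the type of $\alpha$) either one or two boundary circles; since $F'$ is orientable this pins down its homeomorphism type: a pair of pants if $J|_\alpha$ has three components (which happens when $\alpha$ is type II on $F_0$, since $\alpha$ joins the two components of $J$), or a once-punctured torus / twice-punctured sphere otherwise. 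The type of $\beta$ as an arc in $F$ controls how the two copies of $D$ are glued to $F-\eta(\beta)$, and hence controls whether $F|_D$ stays connected and how many boundary components it has.

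The main steps, in order, are as follows. First I would record the combinatorial effect of a type-I, type-II, or type-III arc $\beta$ on $F$: cutting $F$ along $\beta$ (equivalently, looking at $F-\eta(\beta)$ with its two copies of $\beta$, then capping with the two copies of $D$) produces $F|_D$, and one computes directly that a type-I $\beta$ keeps $F|_D$ connected with the same number of boundary components as $F$ has after the surgery on $J$, a type-II $\beta$ changes the count of boundary components in a specific way, and a type-III $\beta$ disconnects. Second, I would do the parallel bookkeeping for $\alpha$ on $F_0$: the number of components of $J|_\alpha=\partial(F|_D)$ is $2$ or $3$ according to whether $\alpha$ is type I/III (does not join the two components of $J$) or type II (joins them) — here one uses that $J=J_1\cup J_2$ has exactly two components and $\alpha$ is essential in the twice-punctured torus $F_0$. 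Third, I would combine: since $F|_D$ is a single incompressible surface with $\chi=-1$ whose boundary is simultaneously determined by $\beta$ (via the gluing) and by $\alpha$ (via $J|_\alpha$), the two descriptions must agree, and an orientable $\chi=-1$ surface with a prescribed number of boundary circles is forced to be a specific surface. Matching connectedness forces $\beta$ type III $\iff$ $\alpha$ type III (disconnected case), and matching the boundary-component count then separates type I from type II, giving $\alpha$ type I $\iff$ $\beta$ type I.

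I expect the main obstacle to be the type-III bookkeeping and making sure the ``disconnected'' outputs match up honestly: when $\beta$ is type III, $F|_D$ has two components, and I need to argue this is incompatible with $\alpha$ being type I or II (where $F|_D$ is connected), and conversely; this requires being careful that the two halves of $D$ are glued to genuinely different pieces, and that incompressibility of $F'$ (Lemma \ref{lem:incompressible_F'}) is not silently assuming connectedness in a way that needs adjustment. A secondary subtlety is keeping straight that the surgery on $F_0$ giving $F'_0$ and the surgery giving $P$ are two different pieces of $\partial V$, so the count of components of $J|_\alpha$ must be read off correctly from $\alpha$'s endpoints on $J$; I would settle this by the explicit local pictures in Figure \ref{fig:arcs-types} and Figure \ref{fig:surgery-along-an-arc} rather than by a purely homological count. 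Once these are nailed down, the equivalence follows by elimination from Table \ref{tab:cases-alfa-beta-types}.
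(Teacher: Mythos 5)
Your overall strategy --- matching the boundary data of $F|_D$ as computed from $\beta$ against $J|_\alpha$ as computed from $\alpha$, via the identity $\partial(F|_D)=J|_\alpha$ and $\partial\alpha=\partial\beta$ --- is sound and is essentially the paper's argument with extra scaffolding. However, your key bookkeeping claim is backwards, and as written the argument fails. You assert that $J|_\alpha$ has ``$2$ or $3$'' components ``according to whether $\alpha$ is type I/III (does not join the two components of $J$) or type II (joins them).'' In fact type I is the \emph{only} type of essential arc in a twice-punctured torus that joins the two boundary components: an arc connecting two distinct boundary circles can never separate, and cutting along it merges those circles into one, which is exactly why the result is a once-punctured torus. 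Types II and III both have their endpoints on a single component of $J$. The correct counts are therefore: $J|_\alpha$ has one component when $\alpha$ is type I, and three components when $\alpha$ is type II or III (the untouched $J_1$ plus the two circles into which $J_2$ splits) --- compare Case 1 of the paper, where $P$, $F'$, $F'_0$ are once-punctured tori, against Cases 3--4, where $J'=J_1'\cup J_2'\cup\gamma$. With your inverted assignment the boundary count would isolate type II rather than type I, and it cannot distinguish your ``type I'' from your ``type III,'' so the intended conclusion does not follow. With the corrected assignment the count isolates type I on both the $\alpha$ side and the $\beta$ side, and the lemma follows.

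Once that is fixed you should also see that nearly all of your machinery (Euler characteristics, incompressibility of $F'$, the careful treatment of the disconnected type-III output) is unnecessary for this lemma. The paper's proof is one line: since $\partial D=\alpha\cup\beta$, the arcs $\alpha$ and $\beta$ have the same endpoints on $J=\partial F_0=\partial F$, so $\alpha$ joins the two components of $J$ if and only if $\beta$ does, and type I is precisely the type characterized by joining the two components. The type II versus type III distinction, which is where your anticipated ``main obstacle'' lives, is the content of the separate Lemma \ref{lem:betaIII-implies-alpha-III} and genuinely requires incompressibility; it is not needed here.
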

\begin{proof}
By construction, the ends of $\alpha$ are the same that those of $\beta$. So, $\alpha$ connects two different components of $J = \partial F_0 = \partial F$ if and only if $\beta$ does it. Since type I is the only arc type that connects two different components of $J$, the lemma holds.
\end{proof}

\begin{lemma}
If $\beta$ is type III, then $\alpha$ is type III.
\label{lem:betaIII-implies-alpha-III}
\end{lemma}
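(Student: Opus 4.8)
The plan is to argue by contrapositive and by elimination of arc types. Suppose $\beta$ is type III, so that boundary-compressing $F$ along the half-disk of $D$ containing $\beta$ disconnects $F$; concretely $F|_D = F'$ has $\chi(F')=-1$ and, because a type III arc cuts a twice-punctured torus into two pieces (each of which, after capping, becomes an annulus-with-a-puncture, i.e.\ a twice-punctured sphere, or has the first Betti number drop), $F'$ will be a disconnected or low-complexity surface. More precisely, a type III arc in a twice-punctured torus cuts it into a once-punctured annulus (= pair of pants) and a separate piece; tracking Euler characteristic and the fact that $\partial F' = J|_\alpha = \partial P = \partial F'_0$, I would first record exactly which surface $F'$ is in the type III case. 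The key input is then Lemma \ref{lem:alphaI-iff-beta-I}: since $\beta$ is type III it is not type I, hence $\alpha$ is not type I either, so $\alpha$ is type II or type III, and it remains to rule out type II.

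First I would use the constraint that $\partial(F|_D) = \partial F'_0 = J|_\alpha$, together with Lemma \ref{lem:incompressible_F'} (which says $F'$, $F'_0$, $P$ are all incompressible) and $\chi = -1$ for each. When $\beta$ is type III, the boundary curves $J|_\alpha$ on $\partial V$ must be compatible with a disconnected/degenerate $F|_D$: the type of $\beta$ forces $J|_\alpha$ to have a specific number of components and a specific separation pattern of $\partial V$. The surgery $J \mapsto J|_\alpha$ is governed by whether $\alpha$ is type I, II, or III in $F_0$ in exactly the same combinatorial way it is for $\beta$ in $F$, because $\alpha$ and $\beta$ share their endpoints and the surgery on $J$ depends only on the arc's endpoints on $J$ and which complementary piece of $\partial V$ it lives in — but the resulting $F'_0 = F_0 - \eta(\alpha)$ does record the type of $\alpha$. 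So I would match the component count and topological type of $\partial F' = J|_\alpha$ as dictated by $\beta$ being type III against the possibilities coming from $\alpha$ being type II versus type III, and show only type III is consistent.

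Concretely: a type II arc in $F_0$ leaves $\partial F_0 = J$ as two separating curves and produces $F'_0$ a pair of pants with $\partial F'_0$ three curves (or the appropriate count), whereas a type III arc produces a different count; simultaneously $\beta$ type III forces $F|_D$ to have boundary matching the type III pattern, and these two must agree since $\partial(F|_D) = \partial F'_0$. The mismatch in the number (or the separation behaviour on $\partial V$) of the boundary components when $\alpha$ is type II but $\beta$ is type III is the contradiction. I expect the main obstacle to be bookkeeping: carefully verifying that the boundary pattern $J|_\alpha$ on $\partial V$ genuinely distinguishes ``$\alpha$ type II'' from ``$\alpha$ type III'' and that this pattern is forced by ``$\beta$ type III'' — i.e.\ making airtight the claim that the two boundary-compressions, on $F$ and on $F_0$, must yield the \emph{same} curve system $J|_\alpha = \partial(F|_D)$, and then reading off the incompatibility. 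If a direct count leaves ambiguity, the fallback is to invoke incompressibility of $P$ and $F'_0$ from Lemma \ref{lem:incompressible_F'} to exclude the stray possibility, exactly as in the proof of Lemma \ref{lem:incompressible_F'} itself.
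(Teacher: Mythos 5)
Your overall strategy (rule out type I via Lemma \ref{lem:alphaI-iff-beta-I}, then rule out type II) is reasonable, but the mechanism you propose for ruling out type II does not work, and the step you would need is exactly the one your plan leaves as ``bookkeeping.'' The boundary pattern $J|_\alpha$ does \emph{not} distinguish $\alpha$ of type II from $\alpha$ of type III: in both cases $\alpha$ has both endpoints on the same component of $J$, the surgery produces the same three-component curve system $J|_\alpha=\partial F'=\partial F'_0=\partial P$, and $\chi(F'_0)=-1$ either way. The two types differ only in whether $F'_0$ is connected (a pair of pants) or disconnected (annulus $\sqcup$ once-punctured torus), which is not visible from the curve system on $\partial V$ or from any component count of $J|_\alpha$. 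So there is no ``mismatch in the number of boundary components'' to exploit, and the contradiction you are counting on never materializes.

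What the lemma actually requires is a three-dimensional argument about the properly embedded surface $F|_D$, which is where the hypothesis ``$\beta$ is type III'' enters: it makes $F|_D$ the disjoint union of a once-punctured torus and an annulus $A_1$ with $\partial A_1\subset J|_\alpha$. The paper then shows $A_1$ is boundary-parallel in $V$ (it is incompressible, hence $\partial$-compressible; the resulting disk has boundary in the incompressible $F'_0$ or $P$, hence bounds a disk in $\partial V$, and irreducibility of $V$ gives the parallelism). Boundary-parallelism forces two curves of $J|_\alpha$ to cobound an annulus \emph{in} $\partial V$ disjoint from the third curve, i.e.\ forces an annulus component of $\overline{\partial V - J|_\alpha}$; since $P$ is connected with $\chi(P)=-1$, that component lies in $F'_0$, so $F'_0$ is disconnected and $\alpha$ is type III. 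Your proposal never isolates the annulus $A_1$ nor establishes its boundary-parallelism — a priori it could be an essential annulus in $V$, in which case a connected (type II) $F'_0$ would be perfectly consistent with your boundary data. That missing step is the substance of the proof, not a detail that incompressibility of $P$ and $F'_0$ supplies for free.
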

\begin{proof}
If $\beta$ is type III then $F'$ is the disjoint union of an annulus $A_1$ and a once-punctured torus. Like any other incompressible surface in $V$, $A_1$ is $\partial$-compressible in $V$. 
Via a $\partial$-compression for $A_1$, we create a disk $A'_1\subset V$ with boundary in either $F'_0$ or $P$. Since both surfaces are incompressible, this means that $\partial(A'_1)$ bounds a disk $B$ in $\partial V$ disjoint from $J' = J|\alpha$. Because $V$ is irreducible, $A'_1$ and $B$ cobound a 3-ball, \emph{i.e} they are parallel in $V$. Hence $A_1$ is boundary parallel to either $F'_0$ or $P$. But $\partial A_1 \subset J' = \partial F'_0 = \partial P$, so one of $F'_0$ or $P$ must contain an annulus component. As $P$ is always connected, we conclude that $F'_0$ is annular and $\alpha$ is type III.
\end{proof}


\begin{table}[h]
    \centering
        \begin{tabular}{c|c|c}
        Case & $\alpha$'s type  & $\beta$'s type\\
        \hline
        1 & I &I \\
        2 & II & II \\
        3 & III & III \\
        4 & III & II \\
        \end{tabular}
    \caption{The four possible cases for $\alpha$ and $\beta$}
    \label{tab:cases-alfa-beta-types}
\end{table}

We are left with finitely many possibilities for the types of $\alpha$ and $\beta$. We now proceed to work on each case separately.

\subsection*{Case 1: arc types I-I} 
In this case $P$, $F'$ and $F'_0$ are all once-punctured torus.
Any twice-punctured surface spanning $J$ between $F$ and $F_0$ will turn into a once-punctured torus $K'$ between $F'$ and $F'_0$. 
By Lemma \ref{lem:tsustumi}, we know that there is at most one such torus $K'$. 
Thus, there are at most three non-parallel twice-punctured tori: $F_0$, $F$, and $K$. 
This concludes Case 1.

\begin{lemma}[Y. Tsutsumi \cite{tsutsumi}] \label{lem:tsustumi}
Let $V$ be a handlebody of genus two, and let J be an essential simple closed curve which separated $\partial V$. Then, $J$ bounds at most four disjoint, non-parallel, genus one incompressible surfaces in $V$.
\end{lemma}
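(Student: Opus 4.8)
\textbf{Proof plan for Lemma \ref{lem:tsustumi}.}

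The plan is to reduce to the structure theory of pairs $(H,J)$ developed by Valdez-S\'anchez and quoted in the preliminaries. Suppose for contradiction that $J$ bounds five disjoint, non-parallel, genus one incompressible surfaces in $V$. Each such surface is a once-punctured torus (it is connected, incompressible, has one boundary component $J$, and genus one). First I would observe that, since these five tori are disjoint and have common boundary $J$, they can be isotoped to be mutually nested: ordering them $T_0, T_1, T_2, T_3, T_4$ so that consecutive ones cobound submanifolds of $V$. The two ``outermost'' submanifolds cut off by $T_0$ and $T_4$ from $\partial V$ contain the annulus-side and the genus-two side, and because $V$ is a genus two handlebody, at least one side of $J$ (the one not containing the annular complementary piece) must itself be a genus two handlebody; this lets me pass to the pair $(H,J)$ where $H$ is a genus two handlebody and $J\subset\partial H$ separates $\partial H$ into a once-punctured torus $F_0$ and the rest. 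So it suffices to bound the number of non-parallel incompressible once-punctured tori with boundary $J$ inside such a pair.

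Next I would invoke Proposition \ref{pro:lem_3.10}: if there were four incompressible tori $T_1', T_2'$ (plus $F_0$ and one more) dividing $H$ into non-product regions $H_1\cup_{T_1'} H_0 \cup_{T_2'} H_2$, then $(H_1,J)$ and $(H_2,J)$ are pairs of type $(1,p_1)$ and $(1,p_2)$ with $p_1,p_2>1$. The heart of the argument is then to count how many non-parallel once-punctured tori a pair of type $(1,p)$ with $p>1$ can contain: I expect this is exactly one (the boundary-parallel one, up to isotopy), using the free-group computation in the proof of Lemma \ref{lem:simple_pair_annulus} applied to the cores of the tori — an essential non-boundary-parallel once-punctured torus in the $(1,p)$ piece would force a non-separating curve on it to be a power circle, which one then shows is impossible for a curve that is primitive in $\pi_1$ of the torus. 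Combining: at most one torus strictly inside $H_1$, one strictly inside $H_2$, plus the two dividing tori $T_1',T_2'$, plus... here I must be careful — this naive count gives four, which is exactly the stated bound, so the content is to show we cannot do better, i.e. a fifth torus is squeezed between two of these and must be parallel to one of them. The key inputs are Propositions \ref{pro:gamma-is-not-primitive-or-power} and \ref{pro:gamma-has-companion-annulus-condition} to control compressibility and companion annuli of the relevant curves.

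The main obstacle, as I see it, is the bookkeeping of the nested decomposition: one must rule out ``product'' regions between consecutive tori (which would make them parallel, contradicting non-parallelism) and simultaneously show that the non-product regions are forced into the very rigid $(1,p)$ form, so that each contributes at most a bounded, explicit number of new non-parallel tori. A secondary subtlety is the case analysis for which side of $J$ is the handlebody $H$ and ensuring the reduction from $V$ to $H$ does not lose any of the five surfaces. Once the decomposition is pinned down, the actual counting is an application of the quoted structural propositions plus the power-circle obstruction, so I would expect the algebra to be short; the topology of assembling the pieces is where the real work lies. (Since this lemma is quoted from \cite{tsutsumi}, I would in practice simply cite it, but the above is how I would reconstruct the proof.)
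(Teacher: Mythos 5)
The paper does not actually prove this lemma; it is imported as a black box from \cite{tsutsumi} (with attribution in the statement), so there is no in-paper argument to compare yours against. Judged on its own, your plan has roughly the right shape --- nest the tori, apply Proposition \ref{pro:lem_3.10}, and exploit the rigidity of type-$(1,p)$ pairs --- but it contains one outright error of setup and one genuine gap at the decisive step.

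The setup error: here $J$ is a \emph{single} essential separating curve on the genus-two surface $\partial V$, so $\partial V-J$ consists of two once-punctured tori and there is no annular complementary piece. Your opening reduction (``the side not containing the annular piece must be a genus two handlebody'') imports the hypotheses of Theorem \ref{thm:cota-2toro-en-dos-asas}, where $J$ is a pair of curves cutting off an annulus; in the present lemma both complementary pieces of each once-punctured torus are genus-two handlebodies and no side is discarded. The gap: you stop exactly where the lemma is or is not proved, conceding that the ``naive count gives four'' and that one must ``show we cannot do better'' without doing so. The missing ingredient is a precise minimality statement: a pair of type $(1,p)$ with $p>1$ contains no incompressible once-punctured torus spanning $J$ other than the two boundary-parallel ones. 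This is not delivered by the mechanism you propose --- Lemma \ref{lem:simple_pair_annulus} concerns coannular non-separating curves on the two boundary tori and its conclusion is that such curves \emph{are} power circles in $H$, which is not a contradiction with being primitive in $\pi_1$ of the punctured torus, so no obstruction falls out of it. Once minimality of $(1,p)$ pairs is in hand (it must be proved or cited from \cite{luis1}), the count does close: given five nested tori $T_1,\dots,T_5$, the pair $T_2,T_3$ cuts $V$ into three non-product regions (a product between $\partial V$ and $T_2$ would force $T_1\parallel T_2$, one between $T_2$ and $T_3$ would force $T_2\parallel T_3$, and one between $T_3$ and $\partial V$ would force $T_4\parallel T_5$), so Proposition \ref{pro:lem_3.10} makes the region between $T_3$ and $\partial V$ a type-$(1,p)$ pair with $p>1$; minimality then forces $T_4$ to be parallel either to $T_3$ or into $\partial V$, and the latter would put $T_5$ in a product region parallel to $T_4$ --- a contradiction either way. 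You should either supply that assembly and the minimality proof, or simply cite the lemma as the paper does.
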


\subsection*{Case 2: arc types II-II} 
In this case, $P$, $F'_0$, and $F'$ are all homeomorphic to a pair of pants. So it will be any other surface $K'$ resulting by boundary compressing a twice-punctured torus $K$ between $F_0$ and $F$ 
(see Lemmas \ref{lem:alphaI-iff-beta-I} and \ref{lem:betaIII-implies-alpha-III}). 
As before, it is enough to prove that there is at most one more surface $K'$ in $V$ spanning $J' = J|_\alpha = \partial P = \partial F'_0 = F'$. 
This is done in Lemma \ref{lem:tsutusmi-pair-of-pants} 
which is a `pair of pants version' of Lemma \ref{lem:tsustumi}. The proof of Lemma \ref{lem:tsutusmi-pair-of-pants} will be discussed in Section \ref{sec:proof-tsutsumi-pair-of-pants}.

\begin{lemma}\label{lem:tsutusmi-pair-of-pants}
Let $V$ be a handlebody of genus two, and let $J$ be a 1-submanifold separating $\partial V$ into two pairs of pants. Then, $J$ bounds at most four pairwise disjoint, non-parallel, incompressible pairs of pants in $V$.
\end{lemma}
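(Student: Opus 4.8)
\textbf{Proof plan for Lemma \ref{lem:tsutusmi-pair-of-pants}.} The plan is to mimic the strategy behind Tsutsumi's Lemma \ref{lem:tsustumi} (the once-punctured torus version) but carry it out in the ``pair of pants'' setting, where the combinatorics of the boundary curves $J=c_1\cup c_2\cup c_3$ replaces a single separating curve. First I would set up the induction: given a maximal collection $P_1,\dots,P_k$ of pairwise disjoint, non-parallel, incompressible pairs of pants spanning $J$, order them so that $P_i$ and $P_{i+1}$ are adjacent, and consider the regions $W_i$ they cut from $V$. Each $W_i$ is a compact $3$-manifold with incompressible boundary whose frontier consists of two pairs of pants glued to annuli in $\partial V$; by an Euler-characteristic count and the classification of such small-complexity pieces, I expect each non-product $W_i$ to be a genus two handlebody. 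The two extreme regions $W_0$ (outside $P_1$) and $W_k$ (outside $P_k$) are, by an argument parallel to Proposition \ref{pro:lem_3.10}, forced to be ``simple'' pieces, and this is where the pair $\{c_1,c_2\}$ of basic circles should enter: cap off two of the three curves of $J$ with $2$-handles to view $V$ as the complement of a $2$-string tangle, exactly as in the proof of Proposition \ref{prop_tangle_argument}.

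Once $V$ is realized as the exterior of a $2$-string tangle $(B^3,T)$ with the three curves of $J$ corresponding to meridians and the slope-$0$ curve, each incompressible pair of pants $P_i$ spanning $J$ corresponds to a properly embedded disk in $B^3$ of boundary slope $0$ meeting each string once, hence to a factorization $T=T_1^{(i)}*T_2^{(i)}$. A maximal nested family of such pairs of pants then corresponds to a maximal chain of consecutive factorizations of $T$, i.e.\ to writing $T$ as a product $T=R_1*R_2*\cdots*R_{k+1}$ of $k+1$ tangles ``stacked'' between consecutive disks. Since $T$ is rational and $N(T)$ is prime, iterating Proposition \ref{ref_prop2} shows that in each adjacent pair at least one factor has the form $1/n$; the non-parallelism hypothesis forbids the trivial factor $1/0=\infty$ or $1/(\pm1)$ from appearing (those would make two consecutive disks parallel, hence two $P_i$ parallel). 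I would then argue that a rational tangle admits at most a bounded number of ``essentially distinct'' consecutive factorizations of this constrained type — concretely, that a continued-fraction expansion of a rational tangle of the relevant kind produces at most three genuinely new slicing disks beyond the two ``trivial'' ones $S$ and $S_0$, giving the bound of four pairs of pants total.

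The main obstacle I anticipate is the last step: ruling out long chains of factorizations of a \emph{rational} tangle. Proposition \ref{ref_prop2} only tells us something about a single product decomposition, and a rational tangle with a long continued-fraction expansion $[a_1,a_2,\dots,a_m]$ \emph{does} admit many product decompositions of the form $1/n * (\text{rational})$; the point that makes the bound finite must be that non-parallel pairs of pants correspond to decompositions that cannot be ``absorbed'' into one another, combined with the constraint that the outermost two pieces are already pinned down by the basic-circle hypothesis via Proposition \ref{prop_tangle_argument} (which shows the two extreme regions are product regions, i.e.\ the corresponding $P_i$ are parallel to $S$ or $S_0$). So the real content is a careful bookkeeping argument: show that between the outermost parallelism classes $S$ and $S_0$ there can be at most two additional non-parallel classes, using that each interior region $W_i$ is a genus two handlebody containing an incompressible pair of pants spanning its frontier, and invoking minimality/non-product structure to cut down the count.

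I would also need to handle the degenerate sub-cases where some $W_i$ is a product $P\times I$ (then $P_i$ and $P_{i+1}$ are parallel, contradicting the hypothesis and shortening the chain) and where one of the three curves of $J$ might not be part of a convenient basis — but the hypothesis that two of them are basic circles, together with Proposition \ref{prop_new_lem_3.5} and Proposition \ref{prop_tangle_argument}, should let me normalize to the tangle picture throughout. If the continued-fraction bookkeeping resists a clean bound, a fallback is to run the argument purely $3$-manifold-theoretically: show each interior piece $W_i$, being a genus two handlebody with the induced pair-of-pants boundary pattern, is itself either a product or a ``pair of type $(1,p)$''-style piece, then bound the length of a chain of non-product such pieces directly by an Euler-characteristic or rank-of-$\pi_1$ argument, exactly as Tsutsumi does for genus one surfaces.
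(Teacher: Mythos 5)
There is a genuine gap, and it is the one you yourself flag at the end of your second paragraph: your plan has no working mechanism for bounding the length of a chain of factorizations, and the route you propose for getting there rests on a hypothesis the lemma does not have. You repeatedly invoke a ``basic-circle hypothesis'' for two of the curves of $J$, and you propose to ``normalize to the tangle picture throughout'' by capping off $c_1$ and $c_2$ with $2$-handles on all of $V$. But the lemma assumes nothing about $c_1,c_2$ being basic in $V$; in the paper the basicness of $\{c_1,c_2\}$ is a \emph{conclusion}, derived only in certain subcases and only for one of the two pieces obtained by splitting $V$ along a pair of pants, via Proposition \ref{pro:lem_3.5_part2} or Proposition \ref{prop_new_lem_3.5} applied after the curves are shown to be power circles on the \emph{other} side. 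Without that, the identification of $V$ with a rational tangle exterior is unjustified, and even granting it, Proposition \ref{ref_prop2} only controls a single product decomposition, as you note; a rational tangle admits arbitrarily many decompositions $1/n * (\text{rational})$, and deciding which of these give non-parallel disks is exactly the unproved step. (In fact, when $\{c_1,c_2\}$ \emph{are} basic, Proposition \ref{prop_tangle_argument} shows every such disk is boundary-parallel, so the ``continued-fraction bookkeeping'' you hope for would never produce interior pairs of pants at all --- a sign that the global tangle picture is the wrong frame for the general case.)

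The missing idea is a local dichotomy rather than a global count. The paper's proof takes an arbitrary incompressible pair of pants $S$ spanning $J$ that is not boundary-parallel, splits $V=H_0\cup_S H_1$ into two genus two handlebodies, and shows that at least one of $(H_0,J)$, $(H_1,J)$ is a \emph{minimal} pair (its only spanning pairs of pants are the two boundary ones). This is done by taking a $\partial$-compressing disk $D$ for $S$ in $H_0$ with $\partial D=\alpha\cup\beta$ and casing on whether $D$ and $\alpha$ separate: if the compression of $H_0$ along $D$ turns everything into incompressible annuli in one or two solid tori with at least one relevant curve primitive, then $(H_0,J)$ is minimal directly; otherwise $c_1,c_2$ are power circles on the $H_0$ side, Proposition \ref{pro:lem_3.5_part2} or \ref{prop_new_lem_3.5} upgrades them to basic circles in $H_1$, and only then does Proposition \ref{prop_tangle_argument} apply --- to $H_1$, not to $V$ --- to show $(H_1,J)$ is minimal. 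The bound of four is then immediate: five nested non-parallel pairs of pants would have a middle one whose two sides each contain an extra essential pair of pants, so neither side is minimal, a contradiction. Your proposal never isolates this one-side-is-minimal claim, which is the entire content of the lemma.
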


\subsection*{Cases 3 and 4: arc types III-III and III-II} 
In this case, $P$ is a pair of pants and $F'_0$ is the disjoint union of an annulus and a once-punctured torus. 
Denote by $F_0, F_1, F_2, \dots, F_n=F\subset H$ the non-isotopic incompressible twice-punctured tori spanning $J$ between $F_0$ and $F$. Label them in such a way that, for each $0<i<n$, $F_i$ is inside the genus three handlebody bounded by $F_{i-1}\cup F_{i+1}$ (see Figure \ref{fig:casoIII_X}(a)). The goal is to show that $n\leq 2$. 

As explained before, the $\partial$-compressing disk $D$ for $F=F_n$ can be chosen to intersect each $F_i$ in one essential arc $\beta_i$ satisfying $\partial \beta_i = \partial \alpha$. 
This way, after $\partial$-compressing each surface each $F_i$, we obtain a pair of pants $P_i$ or the disjoint union of an annulus $A_i$ and a once-punctured torus $T_i$, depending whether the arc $\beta_i$ has label II or III. Denote the curves in $J'$ by $J'_1\cup J'_2\cup \gamma$ where $J'_1=J_1$ and $J'_2\cup \gamma$ is the result of performing surgery to $J_2$ along $\alpha$. Label $\gamma$ so that $\partial T_i=\gamma$ and $\partial A_i = J'_1\cup J'_2$. 
By Lemma \ref{lem:betaIII-implies-alpha-III}, if $\beta_{i+1}$ is labeled III then $\beta_i$ is also labeled III. Thus there exists $1\leq k\leq n+1$ such that, after $\partial$-compression along $D$, we obtain $(n+2)$ incompressible surfaces $(T_0\cup A_0), (T_1\cup A_1), \dots, (T_{k-1}\cup A_{k-1}), P_k, \dots, P_n$, and $P_{n+1}=P$ embedded in $H$ and organized as in Figure \ref{fig:casoIII_X}. 

\begin{figure}
\centering
\labellist \small\hair 2pt 

\pinlabel{(a)} at 10 175

\pinlabel {$F_k$}  at 133 72
\pinlabel {$F_n$}  at 142 113
\pinlabel {$F_{k-1}$}  at 105 35
\pinlabel {$F_1$}  at 70 10
\pinlabel {$F_0$}  at 20 140

\pinlabel{$J_1$} at 115 170
\pinlabel{$A$} at 160 170
\pinlabel{$J_2$} at 225 170
\pinlabel{$\beta_0$} at 290 170
\pinlabel{$F_0$} at 320 140
\pinlabel{{\color{red}$D$}} at 270 140

\pinlabel{$\beta_1$} at 285 105
\pinlabel{$\beta_{k-1}$} at 253 85
\pinlabel{$\beta_{k}$} at 210 85
\pinlabel{$\beta_{n}$} at 182 138

\pinlabel{(b)} at 430 175

\pinlabel{$J'_1,J'_2$} at 510 170
\pinlabel{$P_{n+1}$} at 570 170
\pinlabel{$\gamma$} at 630 170

\pinlabel{$P_n$} at 570 110
\pinlabel{$P_k$} at 570 70

\pinlabel {$A_{k+1}$}  at 465 60
\pinlabel {$A_{1}$}  at 420 75
\pinlabel {$A_{0}$}  at 425 140
\pinlabel {$T_{k+1}$}  at 640 45
\pinlabel {$T_{1}$}  at 725 65
\pinlabel {$T_{0}$}  at 720 140

\endlabellist  
\includegraphics[width=14cm]{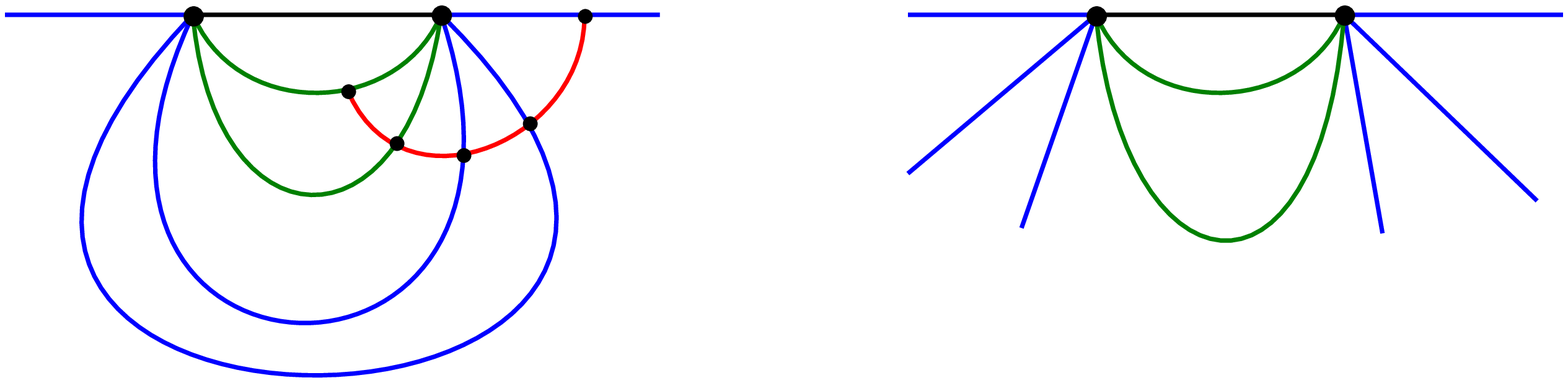}
\caption{Case III-X. (a) Before and (b) after $\partial$-compressing along $D$.}
\label{fig:casoIII_X}
\end{figure}

We first focus on the incompressible annuli $\{A_i:0\leq i \leq k-1\}$ embedded in $H$. Suppose that $A_i$ is not $\partial$-parallel for some $i\neq 0$. Then, $A_i$ is a companion annulus for $J'_1=J_1$ in $H$. By Propositions \ref{pro:gamma-is-not-primitive-or-power} and \ref{pro:gamma-has-companion-annulus-condition}, 
$J_1$ is a power circle in $H$ and $\partial H-\eta(J_1)$ is compressible. The latter contradicts the original assumption that $F_0=\partial H- \eta(J_1)$ is incompressible. Hence, all the annuli $A_i$ are isotopic to $A_0$. 
For each $k\leq i\leq n+1$, define $T_i\subset H$ to be the once-punctured torus with $\partial T_i=\gamma$ obtained by pushing $P_i\cup A_0$ towards the interior of $H$. 

\begin{lemma}\label{lem:paralel_tori}
Let $\mathcal{T}=\{T_i:0\leq i\leq n+1\}$ be the collection of $(n+2)$ tori defined as above. 
Then $\mathcal{T}$ contains only incompressible tori in $H$ and for every pair $(i,j)\neq (k-1,k)$, $T_i$ is not isotopic to $T_j$.
\end{lemma}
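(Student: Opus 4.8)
\emph{Incompressibility.} The plan for the first assertion is to use a soft fact: a properly embedded once‑punctured torus $S$ in an orientable irreducible $3$-manifold $M$ is incompressible as soon as $[\partial S]\neq 1$ in $\pi_1(M)$. Indeed, a separating essential simple closed curve on $S$ is parallel to $\partial S$, so a compression along one would make $[\partial S]$ bound a disk; and a compression along a non‑separating curve $c$ would exhibit the image of $\pi_1(S)\cong F_2$ in $\pi_1(M)$ as a quotient of $F_2/\langle\!\langle c\rangle\!\rangle\cong\mathbb{Z}$, hence abelian, killing the commutator $[\partial S]$. Since every $T_i$ satisfies $\partial T_i=\gamma$ and $V$ is irreducible, it then suffices to observe that $[\gamma]\neq 1$ in $\pi_1(V)$. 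This holds because $T_0$ is the once‑punctured torus component of $F_0'=F_0|_\alpha$, which is incompressible by Lemma~\ref{lem:incompressible_F'}; hence $\pi_1(T_0)$ injects into $\pi_1(V)$, and $[\gamma]=[\partial T_0]$ is a nontrivial element of $\pi_1(T_0)$.

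\emph{Reduction to consecutive pairs.} For the second assertion I would first note that, being obtained by $\partial$-compressing the nested surfaces $F_0,\dots,F_n$ (together with $P=F_{n+1}$) along the \emph{single} embedded disk $D$, which meets each of them in exactly one arc, the tori $T_0,\dots,T_{n+1}$ are nested in $V$ in this order. So if $T_i$ were parallel to $T_j$ with $j>i+1$, the product region between them would contain the remaining incompressible once‑punctured tori $T_{i+1},\dots,T_{j-1}$, all with boundary $\gamma$; by the standard fact that an incompressible once‑punctured torus properly embedded in a product over a once‑punctured torus, with boundary of the product‑parallel slope, is horizontal, the whole run $T_i,\dots,T_j$ would consist of parallel surfaces. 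In particular at least two consecutive pairs among them are parallel, so at least one parallel consecutive pair is different from $(k-1,k)$. Thus it is enough to prove that $T_i$ is \emph{not} parallel to $T_{i+1}$ whenever $i\neq k-1$.

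\emph{Non‑parallel consecutive pairs.} Here the idea is to reverse the $\partial$-compression. Suppose $T_i$ and $T_{i+1}$ cobound a product region $Q$. Recovering $F_i$ from $F_i|_D$ amounts to attaching a $1$-handle along a band $b_i=\eta(\beta_i)$, and $b_i,b_{i+1}$ are parallel bands in $V$ since $\beta_i,\beta_{i+1}$ are parallel sub‑arcs of $D$. If $i\le k-2$, then $\beta_i,\beta_{i+1}$ have type III, $F_i|_D=A_i\sqcup T_i$ and $F_{i+1}|_D=A_{i+1}\sqcup T_{i+1}$, and the annuli $A_i,A_{i+1}$ are parallel to $A_0$ (as shown just before the statement), hence cobound a product region $Q'$; gluing $Q$ to $Q'$ along $b_i\times I\cong b_{i+1}\times I$ produces a product region between $F_i$ and $F_{i+1}$, contradicting that the $F_i$ are pairwise non‑parallel. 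If $i\ge k$, then $\beta_i,\beta_{i+1}$ have type II, $F_i|_D=P_i$, $F_{i+1}|_D=P_{i+1}$ and $T_i=P_i\cup A_0$; using that $A_0$ is $\partial$-parallel I would arrange the two copies of $A_0$ on $\partial Q$ to be parallel, cut $Q$ along the product between them to get a product region between $P_i$ and $P_{i+1}$, and re‑attach $b_i\times I\cong b_{i+1}\times I$, again yielding $F_i\times I$ and a contradiction. Finally, when $i=k-1$ the two compressions are of different kinds ($\beta_{k-1}$ of type III, $\beta_k$ of type II): $F_{k-1}$ is recovered from $A_{k-1}\sqcup T_{k-1}$ by tubing the annulus to the torus, whereas $F_k$ is recovered from $P_k$ by a self‑tube, so the two band patterns are incompatible, the reversal argument breaks, and this is precisely the pair excluded from the statement.

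\emph{Main obstacle.} The delicate step is the last one: one must check that the product region $Q$ between $T_i$ and $T_{i+1}$ can be isotoped so as to meet the re‑attaching bands in a product $b\times I$ (and, in the case $i\ge k$, that the two copies of the $\partial$-parallel annulus $A_0$ lying on $\partial Q$ can be made parallel \emph{inside} $Q$). This is exactly where the hypothesis that all the arcs $\beta_i$ are parallel cross‑sections of a single embedded disk $D$ must be used carefully; once that is in hand, the remaining verifications are routine Euler‑characteristic bookkeeping together with the classification of arcs in a twice‑punctured torus.
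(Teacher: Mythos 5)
Your incompressibility argument (via $[\gamma]\neq 1$ in $\pi_1(H)$ and the structure of essential curves on a once-punctured torus) is a clean alternative to the paper's route, which simply reruns the proof of Lemma~\ref{lem:incompressible_F'}; and your reduction to consecutive pairs, using horizontality of incompressible once-punctured tori in a product, correctly fills in the step the paper dismisses with ``it is not hard to see.'' The case $i\le k-2$ also matches the paper's argument. However, there are two genuine gaps in the remaining cases.

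First, the pair $(T_n,T_{n+1})$ is not covered by your ``re-attach the band'' argument: $P_{n+1}=P$ is a subsurface of $\partial H$ (the annulus $A$ plus a band along $\alpha$), not the $\partial$-compression of any twice-punctured torus $F_{n+1}$, so there is no handle to re-attach and no conclusion ``$F_n\parallel F_{n+1}$'' to contradict. This case cannot be dropped: the count $n+1\le |\mathcal{T}|\le 4$ used immediately after the lemma requires that at most the single pair $(k-1,k)$ coincide. The paper treats $i=n$ separately: if $P_n$ were parallel to $P$, the product region $M$ between them would contain a compressing disk meeting $P_n$ in an arc disjoint from the surgery arc $x$, and this disk becomes a $\partial$-compressing disk for $F_n$ towards $A$, contradicting Lemma~\ref{lem:not_towards_A}. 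Second, the step you isolate as the ``main obstacle'' --- upgrading a parallelism $T_i\parallel T_{i+1}$ to a parallelism $P_i\parallel P_{i+1}$ when $i\ge k$ --- is exactly the nontrivial content of this case, and you do not supply it. Moreover, the ingredient you propose (that the $\beta_j$ are parallel cross-sections of the single disk $D$) is not what makes it work: the paper instead uses that the two copies of $A_0$ sitting in $T_i$ and $T_{i+1}$ are both push-offs of the same boundary annulus, hence their cores cobound an embedded annulus $S$ in the product region, and then invokes the sweep-out lemma of \cite{TaoLi_saddles} (no saddle tangencies since $\chi(S)=0$) to make $S$ vertical, so that the product foliation restricts to a parallelism of the complementary pairs of pants. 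Without some such argument the case $k\le i<n$ is also incomplete.
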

\begin{proof}
One can use the proof of Lemma \ref{lem:incompressible_F'} to check that each torus is incompressible. 
Suppose that $T_i$ is isotopic to $T_j$ for some $i<j$. It is not hard to see that $T_l$ is isotopic to both $T_i$ and $T_j$ for $i<l<j$. Thus, it is enough to check the result for $(i,i+1)\neq (k-1,k)$. 

Let $0\leq i<k-1$. By construction, $F_i$ (resp. $F_{i+1}$) is obtained by adding a 2-dimensional 1-handle connecting $A_i$ and $T_i$ (resp. $A_{i+1}$ and $T_{i+1}$). Here, the cocore of the handle corresponds to the arc $\beta_i$ (resp. $\beta_{i+1}$). Thus, since $A_i$ and  $A_{i+1}$ are isotopic, a parallelism between $T_i$ and $T_{i+1}$ can be extended to a parallelism between $F_i$ and $F_{i+1}$. In particular, $T_i$ and $T_{i+1}$ cannot be isotopic. 

Let $k\leq i\leq n$ and suppose that $T_i$ is parallel to $T_{i+1}$. We claim that $P_i$ and $P_{i+1}$ must be parallel. 
The region $R$ between these tori is homeomorphic to a product $T\times [0,1]$ where $\gamma =\partial T \times \{1/2\}$ and $T_{i}$ (resp. $T_{i+1}$) can be identified with $T\times \{0\}$ (resp. $T\times \{1\}$). Let $\rho_0$ (resp. $\rho_1$) be the core of the annulus $A_0$ inside $T_i=T\times \{0\}$ (resp. $T_{i+1}=T\times \{1\}$). By construction, there is an embedded annulus $S\subset R$ with $\partial S=\rho_0\cup \rho_1$. 
By Lemma 2.1 of \cite{TaoLi_saddles}\footnote{Although this lemma is proven for closed surfaces, the proof holds for Heegaard surfaces with boundary.}, we can isotope the interior of $S$ so that it intersects all but finitely many surfaces $T\times \{t\}$ transversely in simple closed curves, and the non-transverse intersections correspond to saddles or circle tangencies of $S$ (see Fig. 2.1(a) of \cite{TaoLi_saddles}). 
Saddle tangencies change the Euler characteristic by $-1$ and $\chi(S)=0$. Thus $S$ only has circle tangencies. In that case, it is not hard to see that $S$ can be further isotoped to intersect each torus $T\times \{t\}$ in one circle; i.e., $S$ is a vertical annulus. Thus, the trivial foliation of $R$ restricts to a parallelism between $P_{i}$ and $P_{i+1}$.

Let $M$ be the product region between $P_{i}$ and $P_{i+1}$, 
we think of $J'$ as a suture for $\partial M$. If $k\leq i <n$, `undoing' the $\partial$-compression along $D$ corresponds to attaching a 3-dimensional 1-handle to $M$ along a neighborhood of $J'$. Hence, $F_i$ and $F_{i+1}$ are also parallel which is impossible. If $i=n$, since $P_{n+1}=P$ is a subset of $\partial H$, `undoing' the $\partial$-compression does not change $M$. This only changes the suture $J'\subset \partial M$ via surgery along an arc $x\subset P_{n+1}$ connecting $J'_2$ to $\gamma$. Since $M$ is a product, we can find a compressing disk for $M$ intersecting $P_n$ in an arc disjoint from $x$. This disk becomes a $\partial$-compressing disk for $F_n$ in $M$, contradicting the statement of Lemma \ref{lem:not_towards_A}. Hence, the regions $T_i$ and $T_{i+1}$ are not isotopic. 
\end{proof}

By Lemma \ref{lem:paralel_tori}, $\mathcal{T}$ has least $n+1$ non-isotopic incompressible tori in $H$ with boundary $\gamma\subset \partial H$. Thus, by Lemma \ref{lem:tsustumi} we obtain that $n+1 \leq |\mathcal{T}|\leq 4$. In particular, $n\leq 3$.

Suppose $n=3$. Here, the surfaces $T_{k-1}$ and $T_k$ must be isotopic. Denote by $H_4$ the region in $H$ between $T_4$ and $T_5=P\cup A_0$. By construction, $J_1\subset \partial A_0$ is a non-separating curve in $\partial H_4$. 
Observe that the region $R\subset H$ bounded by the original annulus $A$ and the twice-punctured torus $F_4$ is isotopic to $H_4$ in $H$. One can see this by noting that $\partial$-compressing along $D$ changes such region by adding a 3-ball along one 2-disk on its boundary. 
Suppose that $J_1$ is a primitive or power circle in $H_4$, then so it is in $R$. Then Proposition \ref{pro:gamma-is-not-primitive-or-power} 
would imply that $F_4=\partial R-\eta(J_1)$ is compressible in $H_4$ and so in $H$, which is a contradiction. 
Hence, $J_1$ cannot be a primitive or power circle in $H_4$.

Suppose first that $k=4$, then $T_4$ and $T_5=P\cup A_0$ cobound a product region $H_4\cong T\times I$. By construction, the core of $A_0$ is a non-separating simple closed curve in $T$, so it is primitive in $H_4$. Since, $J_1\subset \partial A_0$, we conclude that $J_1$ is primitive in $H_4$. This is impossible by the previous paragraph. 

Suppose that $1\leq k<4$. By Lemma \ref{lem:paralel_tori}, $T_5=A_0 \cup P$ and $T_4=A_0\cup P_4$ are non-isotopic tori. Since $|\mathcal{T}|=4$, Proposition \ref{pro:lem_3.10} 
implies that the region cobounded by $T_4$ and $T_5$ is of type $(1,p)$ for some $p>1$. 
The core of $A_0$ in $T_4$ and the core of $A_0$ in $T_5$ are coannular in $H_4$. Lemma \ref{lem:simple_pair_annulus} implies that the core of $A_0$ in $T_5$ is a power circle in $H_4$. In particular, $J_1 \subset \partial A_0$ is also a power (or primitive) circle in $H_4$ which yields a contradiction. 

Therefore, $n$ must be at most 2. This concludes this case and the proof of Theorem \ref{thm:cota-2toro-en-dos-asas}.

%
%
%
%
%
%
%

\section{Proof of Lemma \ref{lem:tsutusmi-pair-of-pants}}\label{sec:proof-tsutsumi-pair-of-pants}


Denote by $S_0$ and $S_1$ the two pairs of pants in $\partial V-\eta(J)$. Let $S$ be a pair of pants spanning $J$ not parallel to either $S_0$ or $S_1$. Let $H_0$ and $H_1$ be the result of splitting $V$ along $S$, being $H_i$ the one containing $S_i$.
Because $S$ is incompressible, by Lemma 2.3 of \cite{tsutsumi}, $H_0$ and $H_1$ are genus two handlebodies. Moreover, $J$ separates $\partial H_i$ into two pairs of pants; $S$ and $S_i$. 

We will show that for some $i \in \{0,1\}$, the only incompressible pairs of pants bounded by $J$ in $H_i$ are $S$ and $S_i$. Using the notation in \cite{luis1}, such a pair $(H_i, J)$ is called a \emph{minimal pair}. 
Lemma \ref{lem:tsutusmi-pair-of-pants} follows from this claim. By way of contradiction, suppose that there are five non-isotopic pairs of pants in $V$. The one in the middle will split $V$ into two non-minimal pairs, contradicting the minimality of either side. 

Observe that $S$ is $\partial$-compressible in $V$. Without loss of generality, suppose that $S$ is $\partial$-compressible towards $S_0$. Let $D\subset H_0$ be a $\partial$-compressing disk for $S_0$. By definition, $\partial D=\alpha \cup \beta$ where $\alpha\subset S_0$ and $\beta\subset S$ are properly embedded arcs. Since $S_0$ and $S$ are incompressible, the arcs $\alpha$ and $\beta$ are essential in $S_0$ and $S$, respectively. 
Observe that if $D$ separates $H_0$, then $\partial D$ separates $\partial H_0$. In particular, both $\alpha$ and $\beta$ separate $S_0$ and $S$, respectively. Also, recall that whether $\alpha$ separates $S_0$ or not is determined by $\partial \alpha = \partial \beta$. Thus, we have three possibilities depending on whether $D$ and $\alpha$ separate. 

\subsection*{Case (i): $D$, $\alpha$ and $\beta$ are non-separating}
Denote by $H'_0$ the 3-manifold obtained by compressing $H_0$ along $D$. Since $D$ is non-separating, $H'_0$ is a solid torus. 
We can extend the surfaces $S_0$ and $S$ in the boundary of $H'_0$ to obtain two annuli $S'_0$ and $S'$ (see Figure \ref{fig:extending_suture}). Observe that the incompressibility of $S_0$ and $S$ implies that $S'_0$ and $S'$ are incompressible annuli in $H'_0$. In particular, any other annuli $K'\subset H'_0$ with $|K'|=|S'|$ and $\partial K'=\partial S'$ will be boundary parallel to $S'$ or $S'_0$. Hence, $H_0$ is minimal since any other incompressible pair of pants $K\subset H_0$ spanning $J$ will yield other annuli $K'$ in $H'_0$. 

\begin{figure} 
    \centering
    \includegraphics{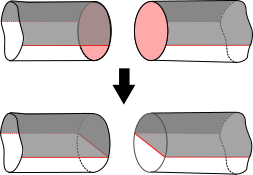}
    \caption{How to extend the surfaces $S_0$ and $S$ after compressing $H_0$.}
    \label{fig:extending_suture}
\end{figure}

\subsection*{Case (ii): $D$, $\alpha$ and $\beta$ separate}
In this case, the result of compressing $H_0$ along $D$ is the disjoint union of two solid tori $V_1 \cup V_2$. Since $\alpha$ is separating in $S_0$, it connects the same component of $J=\partial S_0$. 
Denote the other two curves in $J$ by $c_1$ and $c_2$ where $c_i\subset V_i$.  Since $J$ is non-trivial in $H_0$, $c_i$ is an essential curve in $\partial V_i$ representing a non-trivial loop in $V_i$. Observe that the fundamental group of $H_0$ splits as the free product of the fundamental groups of $V_1$ and $V_2$. Thus, $c_i$ is a power circle (resp. primitive) in $H_0$ if and only if $c_i$ runs at least twice around $V_i$ (resp. is homotopic to a core of $V_i$).

We now discuss what occurs with the surfaces in $H_0$ after compressing along $D$. In this case, the surface $S_0$ (resp. $S$) extends to two annuli $S^{1}_{0}\cup S^{2}_{0}$ (resp. $S^{1}\cup S^{2}$) where $S^{i}_0\subset \partial V_i$ (resp. $S^i\subset \partial V_i$).  
Any other pair of pants $K\subset H_0$ spanning $J$ yields two disjoint annuli $K'_1\cup K'_2$ where $K'_i\subset \partial V_i$. The boundaries of $S^i_{0}$, $S^i$, and $K'_i$ are isotopic to $c_i$ in $\partial V_i$. 
In particular, if $c_i$ is a primitive circle in $V_i$, then $S^i_{0}$, $S^i$, and $K'_i$ are all isotopic in $V_i$ relative its boundary. If $c_i$ is a power circle in $V_i$, then $K'_i$ is isotopic to either $S^i_{0}$ or $S^i$ relative its boundary. Hence, if at least one of the $c_i$ curves is primitive, then any pair of pants $K$ will be isotopic to either $S_0$ or $S$. Thus, $(H_0,J)$ will be a minimal pair. 

To end this subcase, it is enough to assume that both $c_1$ and $c_2$ are power circles in $H_0$. By the above, $c_1, c_2\subset \partial H_1$ satisfy the conditions of P oposition \ref{pro:lem_3.5_part2}. 
So $c_1$ and $c_2$ are basic circles in $H_1$. 
%
Proposition \ref{prop_tangle_argument} applied to $H=H_1$ concludes that $(H_1,J)$ is a minimal pair, as desired. 

\subsection*{Case (iii): $D$ does not separate but $\alpha$ and $\beta$ separate}
In this case, the result of compressing $H_0$ along $D$ is a solid torus $H'_0$. Denote the curves in $J$ by $c_1 \cup c_2 \cup c_3$. Since $\alpha$ separates $S_0$, it connects the same component of $J=\partial S_0$, namely $c_3$. 
After compression, $J|_D$ becomes four loops in the boundary torus $\partial H'_0$ with the same slope. Here, $c_1\cup c_2\subset J|_D$ and $c_3$ breaks into two circles. Since $S_0$ is incompressible, $c_1$ and $c_2$ do not bound disks in $H'_0$. So they are either primitive or power circles in $H'_0$. 

Suppose that $c_1$ is primitive in $H'_0$. 
First observe that the surfaces $S$ and $S_0$ become two annuli which union is $\partial H'_0$. Also, any pair of pants $K\subset H_0$ spanning $J$ will turn into two annuli properly embedded in $H'_0$ spanning $J|_D$. The fact that $c_1$ is primitive implies that $J|_D$ intersects each meridian disk in exactly four points, one per curve. In particular, $K|_D$ is forced to be isotopic to either $S|_D$ or $S_0|_D$. Hence, $(H_0,J)$ is minimal if $c_1$ is primitive in $H'_0$. The same is true for $c_2$. 

In remains to discuss the case when $c_1$ and $c_2$ are both power circles in $H'_0$. Let $H'_1=\overline{V-H'_0}$ be the complement of $H'_0$ in the genus two handlebody $V$. Observe that $H'_1$ is the result of attaching the ball $\eta(D)$ along $\eta(\alpha)$ so $H'_1$ is also a genus two handlebody. Furthermore, $H'_1\cap H'_0$ is the union of two annular neighborhoods of two curves isotopic to $c_1$ and $c_2$. By Proposition \ref{prop_new_lem_3.5} with $T=H'_0$, $H=H'_1$ and $M=V$, we conclude that $\{c_1, c_2\}$ are basic circles in $H'_1$. Since $H'_1=H_1\cup_{\eta(\alpha)} \eta(D)$, it follows that $\{c_1,c_2\}$ are also basic circles in $H_1$. Proposition \ref{prop_tangle_argument} applied to $H=H_1$ concludes that $(H_1,J)$ is a minimal pair. 
This finishes the proof of Lemma \ref{lem:tsutusmi-pair-of-pants}.

\bibliographystyle{alpha}
\bibliography{references} 

$\quad$ \\
Rom\'an Aranda, Binghamton University, Vestal, NY, USA\\
email: \texttt{jaranda@binghamton.edu}
$\quad$ \\
Enrique Ram\'irez-Losada, Centro de Investigaci\'on en Matem\'aticas, Guanajuato, Gto, Mexico\\
email: \texttt{kikis@cimat.mx} 
$\quad$ \\
Jes\'us Rodr\'iguez-Viorato, Centro de Investigaci\'on en Matem\'aticas, Guanajuato, Gto, Mexico\\
email: \texttt{jesusr@cimat.mx}

\end{document}